\theoremstyle{plain}
\newtheorem{theorem}{Theorem}[section]
\theoremstyle{plain}
\newtheorem{lemma}[theorem]{Lemma}
\theoremstyle{plain}
\newtheorem{corollary}[theorem]{Corollary}
\theoremstyle{definition}
\newtheorem{definition}[theorem]{Definition}
\theoremstyle{plain}
\theoremstyle{remark}
\theoremstyle{definition}
\newtheorem{example}[theorem]{Example}
\theoremstyle{plain}
\theoremstyle{plain}
\theoremstyle{plain}
\theoremstyle{plain}
\title{Inductive Dimensions of Coarse Proximity Spaces}
\author{Pawel Grzegrzolka}
\address{Syracuse University, Syracuse, USA}
\email{pgrzegrz@syr.edu}
\author{Jeremy Siegert}
\address{University of Tennessee, Knoxville, USA} 
\email{jsiegert@vols.utk.edu}
\date{\today} 
\keywords{asymptotic inductive dimension, inductive dimension, covering dimension, coarse proximity spaces, boundaries, coarse topology}
\subjclass[2020]{54E05, 54F45, 54D35, 54D40}
\begin{document}

\begin{abstract}
In this paper, we generalize Dranishnikov's asymptotic inductive dimension to the setting of coarse proximity spaces. We show that in this more general context, the asymptotic inductive dimension of a coarse proximity space is bigger or equal to the inductive dimension of its boundary, and consequently may be strictly bigger than the covering dimension of the boundary. We also give a condition, called complete traceability, on the boundary of the coarse proximity space under which the asymptotic inductive dimension of a coarse proximity space and the inductive dimension of its boundary coincide. Finally, we show that spaces whose boundaries are $Z$-sets and spaces admitting metrizable compactifications have completely traceable boundaries.
\end{abstract}

\maketitle
\tableofcontents

\section{Introduction}

The coarse dimensional invariant called asymptotic dimension was originally described by Gromov in \cite{Gromov} and was used to study infinite discrete groups. The theory was later extended more broadly to proper metric spaces, with particular interest being generated by Yu's result relating the property of a proper metric space having finite asymptotic dimension to the Novikov conjecture (see \cite{Yu}). 

Asymptotic dimension provides a coarse analog of Lebesgue covering dimension via a ``going to infinity" perspective. It is defined by specifying what happens at particular ``scales" (represented by uniformly bounded families) within a metric space. This stands in contrast to an alternative perspective in coarse geometry which focuses on properties defined ``at infinity," typically by means of defining properties on boundary spaces associated to metric spaces such as the Higson corona of proper metric spaces or the Gromov boundary of hyperbolic metric spaces. The prototypical definition of a large-scale dimension of a proper metric space ``at infinity" is the covering dimension of the Higson corona of the space. A strong relationship between asymptotic dimension of a proper metric space and the covering dimension of its Higson corona was found by Dranishnikov who proved that if a proper metric space has finite asymptotic dimension, then its asymptotic dimension and the covering dimension of the Higson corona agree (see \cite{Dranishnikovasymptotictopology}). Pursuing the dimension theory of Higson coronae in more detail, Dranishnikov went on to define the asymptotic inductive dimension 
of proper metric spaces in \cite{asymptoticinductivedimension}. 
This dimensional invariant is meant to serve as a coarse analog of the large inductive dimension of topological spaces studied in classical dimension theory. As the large inductive dimension and the covering dimension coincide in the class of metrizable spaces, Dranishnikov asked if the asymptotic inductive dimension of a proper metric space coincides with the covering dimension of its Higson corona. Alongside this question, the problem of wether or not the asymptotic inductive dimension of a proper metric space coincides with the large inductive dimension of its Higson corona was posed. 

In \cite{paper1}, coarse proximity spaces were introduced to axiomatize the ``at infinity" perspective of coarse geometry, providing general definitions of coarse neighborhoods (whose metric space specific definition was given by Dranishnikov in \cite{Dranishnikovasymptotictopology}), asymptotic disjointness, and closeness ``at infinity." As seen in \cite{paper3} some coarse invariants, such as the Higson corona are intuitively described using the language of coarse proximities. Coarse proximity structures lie between metric spaces and coarse spaces (as defined by Roe in \cite{Roe}) in a way similar to how proximity spaces relate to metric spaces and uniform spaces (see \cite{paper2} or \cite{isbell}). In the same way that small scale proximity structures provide an intuitive description of all possible compactifications of locally compact Hausdorff spaces, coarse proximity structures on similar spaces provide intuitive descriptions of the boundaries of such compactifications. In particular, in \cite{paper3}, the authors construct a functor from the category of coarse proximity spaces to the category of compact Hausdorff spaces that assigns to each coarse proximity space a certain ``boundary space." This functor provides a common language for speaking of boundary spaces such as the Higson corona, the Gromov boundary, and other well-known boundary spaces. In this paper, we generalize the notion of asymptotic inductive dimension to all coarse proximity spaces (whose definition agrees with Dranishnikov's definition for proper metric spaces) and investigate both of Dranishnikov's questions in this more general context. In section \ref{preliminaries}, we review the necessary background information surrounding proximities as well as coarse proximities and their boundaries. In section \ref{dimensionofcoarseproximityspaces}, we define the asymptotic inductive dimension of coarse proximity spaces and show that it is an invariant within the category of coarse proximity spaces. We also show that the answer to Dranishnikov's first question (``Does the asymptotic inductive dimension of the proper metric space coincide with the covering dimension of its Higson corona?") generalized to this broader context is negative. In section \ref{maintheorems}, we describe two classes of completely traceable coarse proximity spaces in which the answer to the second of Dranishnokov's questions (``Does the asymptotic inductive dimension of a proper metric space coincide with the large inductive dimension of its Higson corona?") generalized to this broader context is positive. Specifically, these are locally compact Hausdorff spaces that admit metrizable compactification and spaces admitting compactifications whose boundaries are $Z$-sets. These classes include well-known boundaries such as the Gromov and visual boundaries, as well as the boundaries of the ``coarse-compactification," described in \cite{yamauchi}. Finally, in section \ref{Freud}, we apply the results of the paper to the Freudenthal boundary of a certain class of metrizable spaces.

\section{Preliminaries}\label{preliminaries}
In this section, we provide the basic definitions and theorems surrounding proximity spaces and coarse proximity spaces. The definitions and theorems about proximity spaces come from \cite{proximityspaces}, and the definitions and theorems about coarse proximity spaces come from \cite{paper3}.

\begin{definition}
Let $X$ be a set. A binary relation $\delta$ on the power set of $X$ is called a {\bf proximity} on $X$ if it satisfies the following axioms for all $A,B,C \subseteq X$:
\begin{enumerate}
	\item $A\delta B\implies B\delta A,$
	\item $A\delta B\implies A,B\neq\emptyset,$
	\item $A \cap B\neq\emptyset\implies A\delta B,$
	\item $A\delta (B\cup C)\iff A\delta B\text{ or }A\delta C,$
	\item $A\bar{\delta}B\implies\exists E\subseteq X,\,A\bar{\delta}E\text{ and }(X\setminus E)\bar{\delta}B,$
\end{enumerate}
where by $A\bar{\delta}B$ we mean that the negation of $A\delta B$ holds. If in addition to these axioms a proximity satisfies $\{x\}\delta\{y\}\iff x=y$ for all $x,y\in X,$ we say that the proximity $\delta$ is {\bf separated}. A pair $(X,\delta)$ where $X$ is a set and $\delta$ is a proximity on $X$ is called a {\bf proximity space}.
\end{definition}

\begin{definition}
Let $(X, \delta_1)$ and $(Y, \delta_2)$ be two proximity spaces. A function $f:X \to Y$ is called a \textbf{proximity map} if
\[A \delta_1 B \implies f(A) \delta_2 f(B).\]
A bijective proximity map whose inverse is also a proximity map is called a \textbf{proximity isomorphism}. A proximity isomorphism onto a subspace of a proximity space is called a \textbf{proximity embedding}.
\end{definition}

The topology of a proximity space $(X,\delta)$ is defined by means of the closure operator defined by
\[cl(A)=\{x\in X\mid \{x\}\delta A\}.\]
This topology is referred to as the {\bf induced topology} of $\delta$. It is always completely regular, and is Hausdorff if and only if the proximity $\delta$ is separated.

\begin{example}
Let $X$ be a Tikhonov space. That is, $X$ is a Hausdorff space and for every point $x\in X$ and nonempty closed subset $A\subseteq X$ not containing $x$, there is a continuous function $f:X\rightarrow [0,1]$ such that $f(x)=0$ and $f(A)=1$. One can define a proximity relation $\delta$ by defining $A\delta B$ for subsets $A$ and $B$ of $X$ if and only if there is no function $f:X\rightarrow [0,1]$ such that $f(A)=0$ and $f(B)=1$. The topology induced by this proximity is the original topology on $X$. 
\end{example}

\begin{example}
If $(X,d)$ is a metric space then the relation $\delta$ defined by $A\delta B$ if and only if $d(A,B)=0$ is a proximity relation that induces the metric topology.

\end{example}

\begin{example}\label{discrete proximity}
For any nonempty set $X$ the relation $\delta$ defined on subsets of $X$ by $A\delta B$ if and only if $A\cap B\neq\emptyset$ is a proximity relation that induces the discrete topology on $X$. We call this proximity the {\bf discrete proximity}. 

\end{example}

Every separated proximity space admits a unique (up to a proximity isomorphism) compactification, which we describe briefly below.

\begin{definition}
	A {\bf cluster} in a separated proximity space $(X,\delta)$ is a collection $\sigma$ of nonempty subsets of $X$ satisfying the following:
	\begin{enumerate}
		\item For all $A,B\in\sigma$, $A\delta B,$
		\item If $C\delta A$ for all $A\in\sigma,$ then $C\in\sigma,$
		\item If $(A\cup B)\in\sigma,$ then $A\in\sigma$ or $B\in\sigma.$
	\end{enumerate}
A cluster $\sigma$ is called a {\bf point cluster} if $\{x\}\in\sigma$ for some $x\in X.$
\end{definition}
One property of clusters that we are going to use without mention is that given two clusters $\sigma_1$ and $\sigma_2$, one has $\sigma_1\ \subseteq \sigma_2 \implies \sigma_1=\sigma_2$.

The set of all clusters in a separated proximity space is denoted by $\mathfrak{X}$. Given a set $\mathcal{A}\subseteq\mathfrak{X}$ and a subset $C\subseteq X,$ we say that $C$ {\bf absorbs} $\mathcal{A}$ if $C\in\sigma$ for every $\sigma\in\mathcal{A}$.

\begin{theorem}\label{theorem111}
	Let $(X,\delta)$ be a separated proximity space and $\mathfrak{X}$ the corresponding set of clusters. The relation $\delta^{*}$ on the power set of $\mathfrak{X}$ defined by
	\[\mathcal{A}\delta^{*}\mathcal{B}\iff A\delta B\]
	for all sets $A,B\subseteq X$ that absorb $\mathcal{A}$ and $\mathcal{B}$, respectively, is a proximity on $\mathfrak{X}.$ In fact, $(\mathfrak{X}, \delta^*)$ is a compact separated proximity space into which $X$ proximally embeds as a dense subspace (by mapping each point to its corresponding point cluster). \hfill $\square$
\end{theorem}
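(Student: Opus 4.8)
The statement is Smirnov's compactification theorem for separated proximity spaces, so the plan is to (i) verify that $\delta^{*}$ satisfies the five proximity axioms, (ii) check that $\delta^{*}$ is separated, (iii) show that the point-cluster map $\iota\colon X\to\mathfrak{X}$, $x\mapsto\sigma_{x}=\{A:\{x\}\delta A\}$, is a dense proximity embedding, and (iv) show that the induced topology on $\mathfrak{X}$ is compact. Before starting I would record the elementary cluster facts forced by the axioms: $\sigma$ is closed under supersets (cluster axiom (2) plus monotonicity of $\delta$); since $X\delta A$ for all nonempty $A$ we get $X\in\sigma$, so $X=A\cup(X\setminus A)\in\sigma$ and cluster axiom (3) gives $A\in\sigma$ or $(X\setminus A)\in\sigma$ for every $A$; if $A\,\bar{\delta}\,B$ then $A,B$ cannot both lie in $\sigma$ (cluster axiom (1)); and the contrapositive of cluster axiom (2) says that if $A\notin\sigma$ then some $C\in\sigma$ has $A\,\bar{\delta}\,C$. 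It is convenient to restate the definition negatively: $\mathcal A\,\bar{\delta}^{*}\,\mathcal B$ holds exactly when there is a \emph{far pair}, i.e.\ a set $A$ absorbing $\mathcal A$ and a set $B$ absorbing $\mathcal B$ with $A\,\bar{\delta}\,B$.

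The proximity axioms (1)--(3) are immediate: symmetry is built into the definition; if $\mathcal A=\emptyset$ then $\emptyset$ absorbs $\mathcal A$ and $(\emptyset,X)$ is a far pair (as $X$ absorbs every family), giving axiom (2); and if $\sigma\in\mathcal A\cap\mathcal B$ then every pair of absorbing sets lies in $\sigma$ and is $\delta$-near, giving axiom (3). Axiom (5) I would prove with a ``clusters containing $E$'' trick: given a far pair $(A,B)$ witnessing $\mathcal A\,\bar{\delta}^{*}\,\mathcal B$, apply $\delta$-axiom (5) to get $E\subseteq X$ with $A\,\bar{\delta}\,E$ and $(X\setminus E)\,\bar{\delta}\,B$, and set $\mathcal E:=\{\sigma\in\mathfrak X : E\in\sigma\}$. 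Then $E$ absorbs $\mathcal E$ and, by the ``$A$ or $X\setminus A$'' fact, $X\setminus E$ absorbs $\mathfrak X\setminus\mathcal E$, so $\mathcal A\,\bar{\delta}^{*}\,\mathcal E$ and $(\mathfrak X\setminus\mathcal E)\,\bar{\delta}^{*}\,\mathcal B$.

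The main obstacle is the union axiom (4), whose nontrivial (forward) direction I would prove contrapositively: from far pairs $(A_{1},B)$ for $(\mathcal A,\mathcal B)$ and $(A_{2},C)$ for $(\mathcal A,\mathcal C)$ I must manufacture one far pair for $(\mathcal A,\mathcal B\cup\mathcal C)$. The naive candidate $A_{1}\cap A_{2}$ fails, since clusters are \emph{not} closed under finite intersection (a point cluster of $\mathbb R$ contains the adherent sets $(0,1)$ and $(1,2)$ whose intersection is empty). The fix is to fatten the far sets: from $A_{1}\,\bar{\delta}\,B$, a single application of $\delta$-axiom (5) together with the identity $A\,\delta\,B\iff\overline A\,\delta\,\overline B$ produces (taking $W_{1}:=X\setminus\overline{X\setminus E}$) a set with $B\subseteq W_{1}$, $(X\setminus W_{1})\,\bar{\delta}\,B$, and $A_{1}\,\bar{\delta}\,\overline{W_{1}}$; symmetrically I obtain $W_{2}$ with $C\subseteq W_{2}$, $(X\setminus W_{2})\,\bar{\delta}\,C$, and $A_{2}\,\bar{\delta}\,\overline{W_{2}}$. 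Now put $A^{*}:=X\setminus(\overline{W_{1}}\cup\overline{W_{2}})$. It absorbs $\mathcal A$: otherwise some $\sigma\in\mathcal A$ omits $A^{*}$, hence contains $\overline{W_{1}}\cup\overline{W_{2}}$ and, by cluster axiom (3), one of $\overline{W_{1}},\overline{W_{2}}$, contradicting that $A_{1},A_{2}\in\sigma$ with $A_{i}\,\bar{\delta}\,\overline{W_{i}}$. Since $A^{*}\subseteq X\setminus W_{1}$ and $(X\setminus W_{1})\,\bar{\delta}\,B$, monotonicity gives $A^{*}\,\bar{\delta}\,B$, and likewise $A^{*}\,\bar{\delta}\,C$, so $A^{*}\,\bar{\delta}\,(B\cup C)$; as $B\cup C$ absorbs $\mathcal B\cup\mathcal C$, the pair $(A^{*},B\cup C)$ is the desired far pair. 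The backward direction is a one-line consequence of the definition, since any set absorbing $\mathcal B\cup\mathcal C$ also absorbs $\mathcal B$.

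Finally, separatedness follows from the contrapositive of cluster axiom (2): distinct clusters $\sigma\neq\tau$ give $A\in\sigma\setminus\tau$ and then $C\in\tau$ with $A\,\bar{\delta}\,C$, so $(A,C)$ witnesses $\{\sigma\}\,\bar{\delta}^{*}\,\{\tau\}$. The map $\iota$ is a proximity embedding because a set $A'$ absorbs $\iota(A)=\{\sigma_{x}:x\in A\}$ iff $A\subseteq\overline{A'}$, so the closure identity $A\,\delta\,B\iff\overline A\,\delta\,\overline B$ yields $A\,\delta\,B\iff\iota(A)\,\delta^{*}\,\iota(B)$; injectivity is immediate from separatedness of $\delta$. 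For density, note $B$ absorbs $\iota(X)$ iff $B$ is dense in $X$, and any nonempty $A\in\sigma$ is $\delta$-near every dense set, so $\{\sigma\}\,\delta^{*}\,\iota(X)$ for every $\sigma$, i.e.\ $\iota(X)$ is dense. Compactness I would deduce from the standard criterion that a separated proximity space is compact exactly when each of its clusters is a point cluster: given a cluster $\Sigma$ of $\mathfrak X$, I would trace it back to the cluster $\sigma:=\{A\subseteq X:[A]\in\Sigma\}$ of $X$, where $[A]:=\{\tau\in\mathfrak X:A\in\tau\}$, and verify that $\Sigma$ is precisely the point cluster of $\mathfrak X$ determined by $\sigma$. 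Checking this correspondence is routine given the cluster facts above, but it is the step I would write out most carefully.
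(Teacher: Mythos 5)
The paper never proves this statement: it is quoted as background (note the $\square$ in the statement itself) and deferred to the cited literature on proximity spaces, so there is no internal proof to compare yours against. Judged on its own merits, your proposal is correct, and it is essentially the classical Smirnov/Naimpally--Warrack argument. The delicate points all check out: the ``far pair'' reformulation of $\bar{\delta}^{*}$ is the right way to organize the axioms; your fattening trick for the union axiom works (the verifications $B\subseteq W_{1}$, $(X\setminus W_{1})\bar{\delta}B$, $A_{1}\bar{\delta}\overline{W_{1}}$, and the absorption of $\mathcal{A}$ by $A^{*}=X\setminus(\overline{W_{1}}\cup\overline{W_{2}})$ via cluster axioms (1) and (3) all go through), and you correctly identify why the naive candidate $A_{1}\cap A_{2}$ fails. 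The compactness step is the thinnest part as written, but the plan is sound and non-circular: the criterion ``a separated proximity space is compact iff every cluster is a point cluster'' is proved independently via ultrafilters, and the correspondence $\Sigma\mapsto\sigma:=\{A\subseteq X:[A]\in\Sigma\}$ does yield a cluster of $X$ (the key identities being $[A\cup B]=[A]\cup[B]$, the fact that $\mathcal{A}\subseteq[E]$ forces $[E]\in\Sigma$, and $C\subseteq\overline{D}$ whenever $D$ absorbs $[C]$), after which $\Sigma\subseteq P_{\sigma}$ plus the fact that no cluster is properly contained in another gives $\Sigma=P_{\sigma}$. Two small debts you should acknowledge explicitly if this were written out in full: the identity $A\delta B\iff\overline{A}\delta\overline{B}$ and the fact that point clusters are clusters are themselves lemmas requiring the strong axiom, though both are standard and their proofs do not depend on anything you are establishing here.
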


The compactification $(\mathfrak{X}, \delta^*)$ described above is the {\bf Smirnov compactification} of the proximity space $(X,\delta)$. We call the subset $\mathfrak{X}\setminus X$ the {\bf Smirnov boundary} of $X$. It is customary to identify $X$ with its image in $\mathfrak{X}.$ Consequently, given $A \subseteq X,$ one can think of $A$ as a subset of $\mathfrak{X}.$ It is then easy to show that
\[cl_{\mathfrak{X}}(A) = \{ \sigma \in \mathfrak{X} \mid A \in \sigma\}.\]
 In other words, 
\[A \in \sigma \iff \sigma \in cl_{\mathfrak{X}}(A).\]

Now we recall basic definitions and theorems surrounding coarse proximity spaces, as found in \cite{paper3}.

\begin{definition}
	A {\bf bornology} $\mathcal{B}$ on a set $X$ is a family of subsets of $X$ satisfying:
	\begin{enumerate}
		\item $\{x\}\in\mathcal{B}$ for all $x\in X,$
		\item $A\in\mathcal{B}$ and $B\subseteq A$ implies $B\in\mathcal{B},$
		\item If $A,B\in\mathcal{B},$ then $A\cup B\in\mathcal{B}.$
	\end{enumerate}
	Elements of $\mathcal{B}$ are called {\bf bounded} and subsets of $X$ not in $\mathcal{B}$ are called {\bf unbounded}.
\end{definition}

\begin{definition}\label{coarseproximitydefinition}
	Let $X$ be a set equipped with a bornology $\mathcal{B}$. Let $A,B,$ and $C$ be subsets of $X$. A \textbf{coarse proximity} on a set $X$ is a relation ${\bf b}$ on the power set of $X$ satisfying the following axioms:
	
	\begin{enumerate}
		\item $A{\bf b}B \implies B{\bf b}A,$ \label{axiom1}
		\item $A{\bf b}B \implies A,B  \notin \mathcal{B},$ \label{axiom2}
		\item $A\cap B \notin \mathcal{B} \implies A {\bf b} B,$ \label{axiom3}
		\item $(A \cup B){\bf b}C \iff A{\bf b}C$ or $B{\bf b}C,$ \label{axiom4}
		\item $A\bar{\bf b}B\implies\exists E\subseteq X, A\bar{\bf b}E$ and $(X\setminus E)\bar{\bf b}B,$ \label{axiom5}
	\end{enumerate}
where by $A\bar{\bf b}B$ we mean that the negation of $A{\bf b} B$ holds. If $A {\bf b} B$, then we say that $A$ is \textbf{coarsely close} to 
 $B.$ 
A triple $(X,\mathcal{B},{\bf b})$ where $X$ is a set, $\mathcal{B}$ is a bornology on $X$, and ${\bf b}$ is a coarse proximity relation on $X,$ is called a {\bf coarse proximity space}.
\end{definition}

We note here that the definition of a coarse proximity resembles the definition of a proximity, similar to the way the definition of a coarse space resembles the definition of a uniform space (see \cite{Roe} for more on coarse spaces). The substantial change in the definition of a coarse proximity is the replacement of an empty set with any bounded set. This change is what is needed to capture ``closeness at infinity" (see \citep{paper1}, \cite{paper3}), as opposed to capturing ``closeness within the space" (see \cite{proximityspaces}). It is also worth noting that a coarse proximity structure is built purely around the notion of ``closeness at infinity," without utilizing the prior notion of scale (as in the case of coarse spaces). All that is needed is the notion of boundness. For more on the relationship between coarse proximity spaces and coarse spaces, see \cite{paper2}.

\begin{example}\label{subspace definition}
	Let $(X,\mathcal{B},{\bf b})$ be a coarse proximity space and $Y\subseteq X$ any subset. Then the coarse proximity structure on $Y$ given by the bornology
	\[\mathcal{B}_{Y}=\{B\cap Y\mid B\in\mathcal{B}\}\]
	and the binary relation ${\bf b}_{Y}$ defined by
	\[A{\bf b}_{Y}C \iff A{\bf b}C \text{ (as subsets of $X$)}\]
	makes $(Y, \mathcal{B}_{Y}, {\bf b}_Y)$ into a coarse proximity space. This coarse proximity structure is called the \textbf{subspace coarse proximity structure} on $Y$.
\end{example}

The proof that the following example is a coarse proximity space can be found in \cite{paper1}.
\begin{example}\label{metric coarse proximity structure}
	Let $(X,d)$ be a metric space and $\mathcal{B}$ the set of all metrically bounded sets in $X$. The relation ${\bf b}$ defined by
	\[A{\bf b}B\iff\exists\epsilon>0,\,\forall D\in\mathcal{B},\,d(A\setminus D,B\setminus D)<\epsilon\]
 	makes $(X,\mathcal{B},{\bf b})$ into a coarse proximity space. This coarse proximity structure is called the {\bf metric coarse proximity structure} on $(X,d)$.
\end{example}

The proof that the following example is a coarse proximity space can be found in \cite{paper3}.
\begin{example}\label{coarse proximity structure induced by a compactification}
	Let $X$ be a locally compact Hausdorff space and $\overline{X}$ a compactification thereof. Define $\mathcal{B}$ to be the set of all $K\subseteq X$ for which $cl_{X}(K)$ is compact. Then the relation ${\bf b}$ defined by
	\[A{\bf b}B\iff cl_{\overline{X}}(A)\cap cl_{\overline{X}}(B)\cap(\overline{X}\setminus X)\neq\emptyset\]
	makes $(X,\mathcal{B},{\bf b})$ into a coarse proximity space. This coarse proximity structure is called the {\bf coarse proximity structure induced by the compactification} $\overline{X}$. 
\end{example}

\begin{definition}
	Let $(X,\mathcal{B}_1,{\bf b}_1)$ and $(Y,\mathcal{B}_2,{\bf b}_2)$ be coarse proximity spaces. Let $f:X\rightarrow Y$ be a function. Then $f$ is a \textbf{coarse proximity map} provided that the following are satisfied for all $A,B \subseteq X$:
\begin{enumerate}
\item $B\in\mathcal{B}_1 \implies f(B)\in\mathcal{B}_2,$ \label{item11}
\item $A{\bf b}_1B \implies f(A){\bf b}_2f(B).$ \label{item22}
\end{enumerate}
\end{definition}

For the proof of the following theorem, see \cite{paper1}.

\begin{theorem}
	Let $(X,\mathcal{B},{\bf b})$ be a coarse proximity space. Let $\phi$ be the relation on the power set of $X$ defined in the following way: $A\phi B$ if and only if the following hold:
	\begin{enumerate}
	\item for every unbounded $B^{\prime}\subseteq B$ we have $A{\bf b}B^{\prime},$
	\item for every unbounded $A^{\prime}\subseteq A$ we have $A^{\prime}{\bf b}B.$
	\end{enumerate}	
	Then  $\phi$ is an equivalence relation satisfying 
	\[A\phi B \text{ and } C \phi D \implies (A\cup C) \phi (B \cup D)\]
	for any $A,B,C,D \subseteq X.$ We call this equivalence relation the \textbf{weak asymptotic resemblance} induced by the coarse proximity ${\bf b}$. \hfill $\square$
\end{theorem}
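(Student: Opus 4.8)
The plan is to verify the four required properties in the order reflexivity, symmetry, transitivity, additivity, after first isolating a monotonicity property of ${\bf b}$ that I will use throughout. The monotonicity observation is that $A\subseteq A'$ and $A{\bf b}B$ together imply $A'{\bf b}B$; this follows immediately from the union axiom (\ref{axiom4}) applied to the decomposition $A'=A\cup(A'\setminus A)$. I will mostly use its contrapositive form: if $A\subseteq A'$ and $A'\bar{\bf b}B$, then $A\bar{\bf b}B$.

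Reflexivity and symmetry are the easy cases. For $A\phi A$, any unbounded $A'\subseteq A$ satisfies $A\cap A'=A'\notin\mathcal{B}$, so axiom (\ref{axiom3}) gives $A{\bf b}A'$ (and symmetrically $A'{\bf b}A$); thus both defining conditions hold, the statement being vacuously true when $A$ is bounded. Symmetry, $A\phi B\iff B\phi A$, is read off directly from the symmetry axiom (\ref{axiom1}), which simply interchanges the two defining conditions of $\phi$.

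For transitivity, suppose $A\phi B$ and $B\phi C$; I aim to show $A\phi C$. By the symmetry of $\phi$ just established it suffices to verify the first defining condition, namely $A{\bf b}C'$ for every unbounded $C'\subseteq C$, since the second condition follows by running the identical argument on the pair $C\phi B$ and $B\phi A$. I would argue by contradiction: assuming $A\bar{\bf b}C'$, the strong axiom (\ref{axiom5}) produces a set $E\subseteq X$ with $A\bar{\bf b}E$ and $(X\setminus E)\bar{\bf b}C'$. Because $C'\subseteq C$ is unbounded, $B\phi C$ gives $B{\bf b}C'$; decomposing $B=(B\cap E)\cup(B\setminus E)$ and applying the union axiom (\ref{axiom4}), at least one of $(B\cap E){\bf b}C'$ or $(B\setminus E){\bf b}C'$ holds. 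The latter contradicts $(X\setminus E)\bar{\bf b}C'$ by monotonicity, since $B\setminus E\subseteq X\setminus E$. The former forces $B\cap E\notin\mathcal{B}$ by axiom (\ref{axiom2}), so $B\cap E$ is an unbounded subset of $B$; then $A\phi B$ yields $A{\bf b}(B\cap E)$, which contradicts $A\bar{\bf b}E$ by monotonicity, since $B\cap E\subseteq E$. Either branch is impossible, so $A{\bf b}C'$, as desired.

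Additivity is then routine. Given $A\phi B$ and $C\phi D$, to check the first condition of $(A\cup C)\phi(B\cup D)$ I take an unbounded $W\subseteq B\cup D$; since the bornology is downward closed and closed under finite unions, at least one of $W\cap B$, $W\cap D$ is unbounded. If $W\cap B$ is unbounded, then $A\phi B$ gives $A{\bf b}(W\cap B)$, and two applications of monotonicity yield $(A\cup C){\bf b}W$; the case $W\cap D$ unbounded and the second defining condition are handled symmetrically. The main obstacle is transitivity: unlike the other three properties it cannot be extracted from a single axiom, and the essential device is the strong axiom (\ref{axiom5}), which functions here as a separation property allowing $B$ to be split across $E$ and its complement so that each piece is forced into collision with exactly one of the two hypotheses.
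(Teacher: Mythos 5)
Your proof is correct. Note that this paper does not actually contain a proof of this theorem --- it is imported from \cite{paper1} with the remark ``For the proof of the following theorem, see \cite{paper1}'' --- so there is no in-paper argument to compare yours against; what you have produced is a complete, self-contained verification from the axioms of Definition \ref{coarseproximitydefinition}. Each step checks out: the monotonicity lemma is an immediate consequence of the union axiom; reflexivity follows from axiom (3) (and holds vacuously for bounded $A$); symmetry is axiom (1); and your transitivity argument is sound, including the edge cases --- if $B$ is bounded then axiom (2) forces $A$ and $C$ to be bounded as well, so $A\phi C$ holds vacuously, and otherwise your use of the strong axiom to produce $E$ with $A\bar{\bf b}E$ and $(X\setminus E)\bar{\bf b}C'$, followed by splitting $B=(B\cap E)\cup(B\setminus E)$, correctly forces a contradiction in both branches (axiom (2) is what guarantees $B\cap E$ is unbounded, so that the hypothesis $A\phi B$ applies to it). The additivity argument is likewise correct: an unbounded $W\subseteq B\cup D$ must have $W\cap B$ or $W\cap D$ unbounded since the bornology is closed under finite unions, and monotonicity (in both coordinates, the second via axiom (1)) finishes it. Your closing observation is also the right diagnosis: transitivity is the only part requiring the strong axiom, which serves precisely as the separation device that routes the middle set $B$ into collision with one of the two hypotheses.
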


When $X$ is a metric space and ${\bf b}$ is the metric coarse proximity structure, then $\phi$ is equivalent to the relation of having finite Hausdorff distance (see \cite{paper1}).

\begin{definition}\label{coarsecloseness}
	Let $X$ be a set and $(Y,\mathcal{B},{\bf b})$ a coarse proximity space. Two functions $f,g:X \to Y$ are {\bf close} if for all $A \subseteq X$
	\[f(A)\phi g(A),\]
	where $\phi$ is the weak asymptotic resemblance relation induced by the coarse proximity structure ${\bf b}.$
\end{definition}

\begin{definition}
Let $(X,\mathcal{B}_1,{\bf b}_1)$ and $(Y,\mathcal{B}_2,{\bf b}_2)$ be coarse proximity spaces. We call a coarse proximity map $f: X \to Y$ a \textbf{coarse proximity isomorphism} if there exists a coarse proximity map $g:Y\to X$ such that $g\circ f$ is close to $id_{X}$ and $f\circ g$ is close to $id_{Y}.$ We say that $(X,\mathcal{B}_1,{\bf b}_1)$ and $(Y,\mathcal{B}_2,{\bf b}_2)$ are \textbf{coarse proximity isomorpic} if there exists a coarse proximity isomorphism $f:X \to Y.$
\end{definition}

The collection of coarse proximity spaces and closeness classes of coarse proximity maps makes up the category of coarse proximity spaces. Also, a function $f$ between two metric spaces is a coarse map if and only if it is a coarse proximity map between the induced metric coarse proximity spaces. For details, see \cite{paper1}.

As it was shown in \cite{paper3}, each coarse proximity space $(X,\mathcal{B},{\bf b})$ has the associated \textbf{boundary} $\mathcal{U}X$. This boundary space $\mathcal{U}X$ is constructed as follows: let $\delta$ to be a proximity on $X$ defined by
\[A\delta B\iff (A\cap B\neq\emptyset\text{ or }A{\bf b}B).\]
This $\delta$ is a separated proximity called the {\bf discrete extension} of the coarse proximity ${\bf b}$ (one might compare this with the discrete small scale proximity in Example \ref{discrete proximity}). Then $\mathcal{U}X$ is defined to be a subset of \ $\mathfrak{X}$ (where $\mathfrak{X}$ denotes the Smirnov compactification associated to $\delta$) containing only those clusters that do not contain any bounded sets. It was shown in \cite{paper3} that $\mathcal{U}X$ is compact and Hausdorff, and that $\mathcal{U}X$ encodes the asymptotic behavior of subsets of $X.$ In particular, if for $A\subseteq X$, one defines the {\bf trace} of $A$, denoted by $A'$, by
\[A'=cl_{\mathfrak{X}}(A)\cap\mathcal{U}X,\]
then for $A,B\subseteq X,$ one has
\begin{enumerate}
\item $A{\bf b}B \iff A'\cap B' \neq \emptyset,$
\item $A \phi B \iff A' = B',$
\item $A,B \in \sigma\in\mathcal{U}X$ $\implies$ $A{\bf b}B.$
\end{enumerate}

Every coarse proximity map between coarse proximity spaces induces a map between the boundaries of these coarse proximity spaces, as the following definition shows.
\begin{definition}\label{mapUf}
Let $f:(X,\mathcal{B}_1,{\bf b}_1)\rightarrow(Y,\mathcal{B}_2,{\bf b}_2)$ be a coarse proximity map. Then the map $\mathcal{U}f: \mathcal{U}X \to \mathcal{U}Y$ is defined by
\[\mathcal{U}f(\sigma)=\{D\subseteq Y\mid D{\bf b}_{2}f(C) \text{ for all }C\in\sigma\}.\]
\end{definition}
It was shown in \cite{paper3} that the above map is continuous and agrees on $\mathcal{U}X$ with the unique extension of $f$ to the Smirnov compactifications $\mathfrak{X}$ and $\mathfrak{Y}$ of $X$ and $Y$, respectively (where $\mathfrak{X}$ and $\mathfrak{Y}$ are Smornov compactifications of the respective discrete extensions).

We finish this section with two important examples of boundaries of coarse proximity spaces. For details on these examples, see \cite{paper3}.

\begin{example}
	If $(X,d)$ is a proper metric space (where proper means that closed bounded subsets of $X$ are compact) equipped with its metric coarse proximity structure as in Example \ref{metric coarse proximity structure}, then $\mathcal{U}X$ is homeomorphic to the Higson corona $\nu X$ of $X$.
\end{example}

\begin{example}\label{important_example}
	Let $X$ be a locally compact Hausdorff space with compactification $\overline{X}$. If $X$ is equipped with the coarse proximity structure induced by the compactification $\overline{X}$ as in Example \ref{coarse proximity structure induced by a compactification}, then $\mathcal{U}X$ is homeomorphic to $\overline{X}\setminus X$.
\end{example}

\section{Asymptotic inductive dimension of coarse proximity spaces}\label{dimensionofcoarseproximityspaces}

In this section, we define asymptotic inductive dimension of coarse proximity spaces. Our definition is a generalization of the definition of asymptotic inductive dimension for proper metric spaces as defined by Dranishnikov in \cite{asymptoticinductivedimension}, whose definition is a coarse analog of the large inductive dimension. 
Our definition will provide an invariant within the category of coarse proximity spaces.

We first review the definition of the large inductive dimension. For a thorough treatment of the theory thereof, see \cite{engelking}.

\begin{definition}
	Given disjoint subsets $A$ and $B$ of a topological space $X$, a {\bf separator} between them is a subset $C\subseteq X$ such that there are disjoint open sets $U,V\subseteq X$ such that $X\setminus C=U\cup V$, $A\subseteq U$, and $B\subseteq V$.
\end{definition}

Notice that separators are necessarily closed.

\begin{definition}\label{large inductive dimension}
	Let $X$ be a normal space. The {\bf large inductive dimension} of $X,$ denoted $Ind(X)$, is defined in the following way:
	\begin{itemize}
		\item $Ind(X)=-1$ if and only if $X$ is empty;
		\item for $n\geq 0$, $Ind(X)\leq n$ if for every pair of disjoint closed subsets $A,B\subseteq X,$ there exists a separator $C\subseteq X$ between $A$ and $B$ such that $Ind(C)\leq n-1;$
		\item $Ind(X)=n$ if $n\geq -1$ is the smallest integer for which $Ind(X)\leq n$ holds;
		\item if $Ind(X)\leq n$ doesn't hold for any integer, then we say that $Ind(X)=\infty$.
	\end{itemize}
\end{definition}

The following coarse analog of large inductive dimension for proper metric spaces  given by Dranishnikov can be found in \cite{asymptoticinductivedimension} and \cite{Bell}. To understand the definition, recall that two subsets $A$ and $B$ of a metric space are called \textbf{asymptotically disjoint} if and only if $\lim_{r \to \infty}d(A\setminus N_r(x_0), B \setminus N_r(x_0)) = \infty$ for any base point $x_0,$ where $N_r(x_0)$ denotes the ball of radius $r$ with the center $x_0.$  

\begin{definition}
	Given a proper metric space $(X,d)$ and two subsets $A, B \subseteq X$ that are asymptotically disjoint, a set $C\subseteq X$ is an \textbf{asymptotic separator} between $A$ and $B$ if the trace of $C$ in $\nu X$ (i.e., the intersection of the closure of $C$ in the Higson compactification and the Higson Corona $\nu X$) is a separator in $\nu X$ between traces of $A$ and $B$ in $\nu X.$
\end{definition}

\begin{definition}\label{asymptotic_inductive_dimension_definition}
	Let $(X,d)$ be a proper metric space. The {\bf asymptotic inductive dimension} of $X,$ denoted $asInd(X)$, is defined in the following way:
	\begin{itemize}
		\item $asInd(X)=-1$ if and only if $X$ is bounded;
		\item for $n\geq 0$, $asInd(X)\leq n$ if for every pair of asymptotically disjoint subsets $A,B\subseteq X,$ there exists an asymptotic separator $C\subseteq X$ between $A$ and $B$ such that $asInd(C)\leq n-1;$
		\item $asInd(X)=n$ if $n\geq -1 $ is the smallest integer for which $asInd(X)\leq n$ holds;
		\item if $asInd(X)\leq n$ doesn't hold for any integer, then we say that $asInd(X)=\infty$.
	\end{itemize}
\end{definition}

Now we generalize Dranishnikov's asymptotic inductive dimension to all coarse proximity spaces. 

\begin{definition}
	Given subsets $A$ and $B$ of a coarse proximity space $(X,\mathcal{B},{\bf b})$ such that $A\bar{\bf b}B$, a set $C\subseteq X$ is an \textbf{asymptotic separator} between $A$ and $B$ if $C^{\prime}$ is a separator in $\mathcal{U}X$ between $A^{\prime}$ and $B^{\prime}$.
\end{definition}

	Note that the above definition coincides with Dranishnikov's definition of an asymptotic separator in the case of proper metric spaces.
	
	\begin{definition}
	Let $(X,\mathcal{B},{\bf b})$ be a coarse proximity space. The {\bf asymptotic inductive dimension} of $X,$ denoted $asInd(X)$, is defined in the following way:
	\begin{itemize}
		\item $asInd(X)=-1$ if and only if $X$ is bounded;
		\item for $n\geq 0$, $asInd(X)\leq n$ if for $A,B\subseteq X$ such that $A\bar{\bf b}B,$ there exists an asymptotic separator $C\subseteq X$ between $A$ and $B$ such that $asInd(C)\leq n-1$ (where $C$ is equipped with the subspace coarse proximity structure);
		\item $asInd(X)=n$ if $n\geq -1 $ is the smallest integer for which $asInd(X)\leq n$ holds;
		\item if $asInd(X)\leq n$ doesn't hold for any integer, then we say that $asInd(X)=\infty$.
	\end{itemize}
\end{definition}

	Note that the above definition coincides with Dranishnikov's definition of an asymptotic inductive dimension when the proper metric space is given the metric coarse proximity structure.
	
	To show that the asymptotic inductive dimension is invariant in the category of coarse proximity spaces, we need the following two lemmas.

\begin{lemma}\label{coarse proximity is completely preserved by isomorphisms}
	Let $f:(X, \mathcal{B}_1, {\bf b}_1)\rightarrow (Y, \mathcal{B}_2, {\bf b}_2)$ be a coarse proximity isomorphism with coarse proximity inverse $g$.  Let $\phi_1$ and $\phi_2$ be weak asymptotic resemblances associated to ${\bf b}_1$ and ${\bf b}_2$, respectively. Then given $A,B\subseteq X,$ we have that:
	\begin{enumerate}
	\item $A{\bf b}_1B$ if and only if $f(A){\bf b}_2f(B),$
	\item $A\phi_{1} B$ if and only if $f(A)\phi_{2}f(B).$
	\end{enumerate}
\end{lemma}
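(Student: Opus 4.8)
The plan is to prove both equivalences by leveraging the fact that $f$ and $g=f^{-1}$ are mutually inverse up to closeness, so that applying the maps in sequence returns a set asymptotically equal to the original. The two statements are really of the same flavor: one direction in each follows directly from the definition of a coarse proximity map, and the reverse direction is recovered by applying the inverse map together with the compatibility of $\phi$ with closeness.

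\begin{proof}
First I would prove (1). The forward direction, $A{\bf b}_1 B \implies f(A){\bf b}_2 f(B)$, is immediate from the definition of a coarse proximity map. For the reverse, suppose $f(A){\bf b}_2 f(B)$. Since $g$ is a coarse proximity map, applying it gives $g(f(A)){\bf b}_1 g(f(B))$. Now $g\circ f\sim id_X$, which by Definition \ref{coarsecloseness} means $g(f(A))\phi_1 A$ and $g(f(B))\phi_1 B$. The key step is then to show that the relation ${\bf b}_1$ respects $\phi_1$, i.e.\ if $A'\phi_1 A$, $B'\phi_1 B$, and $A'{\bf b}_1 B'$, then $A{\bf b}_1 B$. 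This follows from the characterization in terms of traces listed in the preliminaries: $\phi_1$ preserves traces (so $A'$ and $A$ have equal trace, likewise for $B'$ and $B$), and ${\bf b}_1$ is detected by nonempty intersection of traces. Hence $A{\bf b}_1 B$, completing (1).

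Next I would prove (2). Again one direction should follow by applying $g$ and using that $g\circ f\sim id_X$. Suppose $A\phi_1 B$. Since $f$ is a coarse proximity map, I want to conclude $f(A)\phi_2 f(B)$; unwinding Definition \ref{coarsecloseness}, $\phi_2$ is characterized via the ${\bf b}_2$ relation on unbounded subsets, so I would verify the two defining conditions of $\phi_2$ using part (1) applied to unbounded subsets of $A$ and $B$, together with the fact that $f$ sends unbounded sets to unbounded sets (which holds because $f$ preserves bounded sets and is, up to closeness, invertible). For the reverse direction, assume $f(A)\phi_2 f(B)$; applying $g$ yields $g(f(A))\phi_1 g(f(B))$, and since $g\circ f\sim id_X$ gives $g(f(A))\phi_1 A$ and $g(f(B))\phi_1 B$, transitivity of the equivalence relation $\phi_1$ yields $A\phi_1 B$.

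The main obstacle is the step asserting that ${\bf b}_1$ is invariant under replacing a set by a $\phi_1$-equivalent one, and the analogous statement that $f$ preserves unboundedness; everything else is bookkeeping with the definitions of coarse proximity map and closeness. Both of these can be handled cleanly through the trace characterizations ($A{\bf b}B\iff A'\cap B'\neq\emptyset$ and $A\phi B\iff A'=B'$) recalled in the preliminaries, so I would route the argument through traces rather than manipulating the ${\bf b}$ and $\phi$ relations directly. An alternative, fully internal route avoids traces entirely: $\phi_1$-invariance of ${\bf b}_1$ follows from the union axiom together with the defining conditions of $\phi_1$, and unboundedness preservation follows because $g(f(B))\phi_1 B$ forces $f(B)$ to be unbounded whenever $B$ is.
\end{proof}
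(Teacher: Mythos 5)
Your proof is correct, and its skeleton matches the paper's: forward implications come straight from the definition of a coarse proximity map, and reverse implications come from applying $g$, invoking $gf(A)\phi_{1}A$ and $gf(B)\phi_{1}B$ (from $g\circ f\sim id_{X}$), and then using that ${\bf b}_{1}$-relatedness, respectively $\phi_{1}$-equivalence, survives replacement of sets by $\phi_{1}$-equivalent ones. Where you genuinely differ is in how the two key compatibility facts are justified. The paper outsources both: Corollary 6.18 of \cite{paper1} for the implication ($gf(A){\bf b}_{1}gf(B)$, $gf(A)\phi_{1}A$, $gf(B)\phi_{1}B$ $\Rightarrow$ $A{\bf b}_{1}B$), and Proposition 7.14 of \cite{paper1} for the forward half of (2). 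You instead derive the first from the trace characterizations $A{\bf b}B\iff A'\cap B'\neq\emptyset$ and $A\phi B\iff A'=B'$ quoted in the preliminaries, and you prove the second directly, realizing each unbounded $D\subseteq f(B)$ as $f(B\cap f^{-1}(D))$ with $B\cap f^{-1}(D)$ unbounded and invoking the definition of $\phi_{1}$ together with part (1); both of these work, so your write-up is self-contained modulo the Section 2 facts, whereas the paper's cited results are internal to the coarse proximity axioms and keep the lemma independent of the boundary machinery from \cite{paper3}. Two minor corrections to your closing remarks: preservation of unboundedness needs no invertibility at all, since for any coarse proximity map $B\notin\mathcal{B}_{1}$ gives $B{\bf b}_{1}B$ by axiom (3), hence $f(B){\bf b}_{2}f(B)$, hence $f(B)\notin\mathcal{B}_{2}$ by axiom (2); and your claim that the ``fully internal'' proof of $\phi$-invariance of ${\bf b}$ needs only the union axiom and the definition of $\phi$ is too quick --- the strong axiom is essential there (one interposes a set $E$ with $B\bar{\bf b}E$ and $(X\setminus E)\bar{\bf b}C$; without axiom (5) the statement is false), and that is presumably exactly the content of the cited Corollary 6.18.
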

\begin{proof}
That $A{\bf b}_1B$ implies $f(A){\bf b}_2f(B)$ is given simply by the definition of a coarse proximity map. Conversely, if $f(A){\bf b}_2f(B)$, then $gf(A){\bf b}_1gf(B)$. By the definition of coarse proximity isomorphisms, we have that $gf(A)\phi_1 A$ and $gf(B)\phi_1 B$. By Corollary $6.18$ in \cite{paper1}, we get that $A{\bf b}_1B$.

If $A\phi_{1} B,$ then $f(A)\phi_{2} f(B)$ because coarse proximity isomorphisms preserve $\phi$ relations (see Proposition 7.14 in \cite{paper1}). To see the opposite direction, let $f(A)\phi_{2}f(B).$  Then, $gf(A)\phi_{1}gf(B).$ By the definition of coarse proximity isomorphisms, this shows that $A\phi_{1}gf(A)\phi_{1}gf(B)\phi_{1}B$, which implies that $A\phi_{1}B.$ 
\end{proof}

\begin{lemma}\label{sending traces to traces}
	Let $f:(X,\mathcal{B}_{1},{\bf b}_{1})\rightarrow (Y,\mathcal{B}_{2},{\bf b}_{2})$ be a coarse proximity map and $\mathcal{U}f:\mathcal{U}X\rightarrow\mathcal{U}Y$ the corresponding continuous map between boundaries. Then for an unbounded set $A\subseteq X$ with trace $A^{\prime},$ we have that $\mathcal{U}f(A^{\prime})\subseteq f(A)^{\prime}$. If $f$ is a coarse proximity isomorphism with coarse inverse $g$, then $\mathcal{U}f(A^{\prime})= f(A)^{\prime}$.
\end{lemma}
\begin{proof}
Let $f:(X,\mathcal{B}_{1},{\bf b}_{1})\rightarrow (Y,\mathcal{B}_{2},{\bf b}_{2})$ be given. Identify $X$ and $Y$ as the corresponding sets of point clusters given by the respective discrete extensions $\delta_{1}$ and $\delta_{2}$. Let $A\subseteq X$ be unbounded. Note that if $\sigma$ is an element of $\mathcal{U}X,$ then by the remarks after Theorem \ref{theorem111}, and by the bullet point (3) before Definition \ref{mapUf}, we have that
\[\sigma\in A^{\prime} \quad \iff \quad C{\bf b}_1A \text{ for all } C\in\sigma.\]
Also, recall that $\mathcal{U}f(\sigma)$ is given by
\[\mathcal{U}f(\sigma)=\{D\subseteq Y\mid D{\bf b}_{2}f(C) \text{ for all }C\in\sigma\}.\]
Let $\sigma \in A'.$ We wish to show that $\mathcal{U}f(\sigma)\in f(A)^{\prime}$. It will suffice to show that if $D\in\mathcal{U}f(\sigma)$ then $D{\bf b}_{2}f(A)$. Let $D\in\mathcal{U}f(\sigma)$. Then, by the definition of $\mathcal{U}f(\sigma)$, $D{\bf b}_{2}f(C) \text{ for all }C\in\sigma$. Since $A \in \sigma$, $D{\bf b}_{2}f(A)$. Thus, $\mathcal{U}f(\sigma)\in f(A)^{\prime}$, concluding the proof that $\mathcal{U}f(A^{\prime})\subseteq f(A)^{\prime}$.

Now assume that $f$ is a coarse proximity isomorphism with a coarse inverse $g.$ Let $\sigma' \in f(A)'.$ Then $D{\bf b}_2 f(A)$ for all $D \in \sigma'.$ We wish to show that $\sigma' \in \mathcal{U}f(A'),$ i.e., there exists $\sigma \in A'$ such that $\mathcal{U}f(\sigma)=\sigma'.$ Define
\[\sigma:=\mathcal{U}g(\sigma')=\{C\subseteq X\mid C{\bf b}_{1}g(D) \text{ for all }D\in\sigma'\}.\]
Let us show that $\sigma \in A'$ and $\mathcal{U}f(\sigma)=\sigma'.$ To see that $\sigma \in A',$ recall that $\sigma' \in f(A)'$ implies that $f(A) \in \sigma'.$ Consequently, $g(f(A)) \in \sigma.$ Since clusters in the boundary are closed under the $\phi$ relation (see Lemma $4.4$ in \cite{paper3}) and $g(f(A))\phi_1A$ by the definition of a coarse proximity isomorphism, we have that $A \in \sigma.$ To see that $\mathcal{U}f(\sigma)=\sigma',$ let $D \in \sigma'.$ To show that $D \in \mathcal{U}f(\sigma),$ we need to show that $D{\bf b}_{2}f(C) \text{ for all }C\in\sigma.$ Let $C \in \sigma.$ This means that $C{\bf b}_1 g(E)$ for all $E \in \sigma'.$ In particular, $C{\bf b}_1 g(D).$ Consequently,
\[f(C){\bf b}_2f(g(D)) \phi D.\]
Thus, $f(C){\bf b}_2 D,$ finishing the proof that $\sigma'\subseteq \mathcal{U}f(\sigma).$ But this implies that $\mathcal{U}f(\sigma)=\sigma'.$ Thus, $\mathcal{U}f(A^{\prime}) = f(A)^{\prime},$ as desired.
\end{proof}

\begin{theorem}
	Isomorphic coarse proximity spaces have the same asymptotic inductive dimension.
\end{theorem}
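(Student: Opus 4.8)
The plan is to prove by induction on $n$ that if $f:(X,\mathcal{B}_1,{\bf b}_1)\to(Y,\mathcal{B}_2,{\bf b}_2)$ is a coarse proximity isomorphism, then $asInd(X)\leq n$ implies $asInd(Y)\leq n$; applying this to both $f$ and its coarse inverse $g$ yields equality of the two (possibly infinite) dimensions. The base case $n=-1$ is immediate: $asInd(X)=-1$ means $X$ is bounded, and since coarse proximity maps send bounded sets to bounded sets and $g\circ f\sim id_X$, boundedness is preserved under isomorphism (if $Y$ were unbounded, $g(Y)$ would be unbounded, contradicting $g(Y)\subseteq X$ bounded up to the closeness relation; more directly, $f(X)$ is bounded and $\phi$-equal to $Y$ via $f\circ g\sim id_Y$, forcing $Y$ bounded).

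For the inductive step, assume the claim holds for $n-1$ and suppose $asInd(X)\leq n$. I would take any $A,B\subseteq Y$ with $A\bar{\bf b}_2 B$ and produce an asymptotic separator $C\subseteq Y$ with $asInd(C)\leq n-1$. First, pull back via $g$: by Lemma \ref{coarse proximity is completely preserved by isomorphisms}, $A\bar{\bf b}_2 B$ is equivalent to $g(A)\bar{\bf b}_1 g(B)$, so since $asInd(X)\leq n$ there is an asymptotic separator $C_0\subseteq X$ between $g(A)$ and $g(B)$ with $asInd(C_0)\leq n-1$. By definition $C_0'$ separates $g(A)'$ and $g(B)'$ in $\mathcal{U}X$. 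I would then set $C:=f(C_0)\subseteq Y$ and show $C$ is an asymptotic separator between $A$ and $B$ in $Y$, i.e. that $C'=f(C_0)'$ separates $A'$ and $B'$ in $\mathcal{U}Y$.

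The key tool here is Proposition \ref{sending traces to traces}: since $f$ is an isomorphism, $\mathcal{U}f(E')=f(E)'$ for every unbounded $E\subseteq X$, and $\mathcal{U}f$ is a homeomorphism $\mathcal{U}X\to\mathcal{U}Y$ (with inverse $\mathcal{U}g$, as the extensions to the Smirnov compactifications are functorial and $g\circ f\sim id_X$ induces $\mathcal{U}g\circ\mathcal{U}f=id_{\mathcal{U}X}$). Applying the homeomorphism $\mathcal{U}f$ to the separation $\mathcal{U}X\setminus C_0'=U\sqcup V$ with $g(A)'\subseteq U$, $g(B)'\subseteq V$, I obtain a separation of $\mathcal{U}Y\setminus \mathcal{U}f(C_0')$ into disjoint open sets containing $\mathcal{U}f(g(A)')$ and $\mathcal{U}f(g(B)')$. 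It then remains to identify these traces: $\mathcal{U}f(C_0')=f(C_0)'=C'$, and $\mathcal{U}f(g(A)')=\mathcal{U}f\big(\mathcal{U}g(A')\big)=A'$ using $\mathcal{U}g(A')=g(A)'$ (Proposition \ref{sending traces to traces} applied to $g$) together with $\mathcal{U}f\circ\mathcal{U}g=id_{\mathcal{U}Y}$, and similarly for $B$. This shows $C'$ separates $A'$ from $B'$.

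Finally, I must invoke the inductive hypothesis on $C$. Here $C=f(C_0)$ carries the subspace coarse proximity structure from $Y$, and $C_0$ carries the subspace structure from $X$; the restriction $f|_{C_0}:C_0\to C$ is a coarse proximity isomorphism onto its image (one checks the subspace relations ${\bf b}_{C_0}$, ${\bf b}_C$ are just the ambient relations restricted, so the isomorphism property descends, with inverse the corestriction of $g$). Since $asInd(C_0)\leq n-1$, the inductive hypothesis gives $asInd(C)\leq n-1$, completing the step and hence the proof. The main obstacle I anticipate is this last point: carefully verifying that $f$ restricts to an isomorphism of subspaces so that the inductive hypothesis genuinely applies, since the definition of $asInd$ recurses through subspace structures and one must confirm that traces computed in the subspace boundaries behave compatibly with the ambient boundary homeomorphism $\mathcal{U}f$.
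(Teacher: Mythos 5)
Your proposal is correct and follows essentially the same route as the paper's proof: induction on the dimension, pulling the pair $A,B$ back through $g$, pushing the resulting asymptotic separator $C_0$ forward through $f$, and identifying the relevant traces via Lemma \ref{coarse proximity is completely preserved by isomorphisms} and Proposition \ref{sending traces to traces}. The one repair needed is in your parenthetical justification of the restriction step: the ``corestriction of $g$'' is not a well-defined map $f(C_0)\to C_0$, since $g(f(C_0))$ is only $\phi$-equivalent to $C_0$ and need not be contained in it; instead one takes a section $s\colon f(C_0)\to C_0$ of $f|_{C_0}$ (choosing for each $y\in f(C_0)$ a preimage in $C_0$) and checks, using $s(D)\phi g(D)$ for every $D\subseteq f(C_0)$ together with the $\phi$-invariance of ${\bf b}$, that $s$ is a coarse proximity map and a coarse inverse of $f|_{C_0}$. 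You rightly flagged this as the delicate point --- the paper's own proof simply asserts $asInd(f(C))=asInd(C)$ with no justification at all, so your attempt is actually more explicit about this verification than the paper itself.
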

\begin{proof}
As could be expected, the proof will be by induction. Let $f:(X,\mathcal{B}_1, {\bf b}_1)\rightarrow (Y, \mathcal{B}_2, {\bf b}_2)$ be a coarse proximity isomorphism with coarse inverse $g$. If $asInd(X)=-1,$ then $X$ is bounded, which implies that $Y$ is bounded as well, giving $asInd(Y)=-1$. Now assume that the result holds up to (and including) $n-1$ and assume that $asInd(X)=n$. Let $A,B\subseteq Y$ be such that $A\bar{\bf b}_2B$. Then by Lemma \ref{coarse proximity is completely preserved by isomorphisms} we have that $g(A)\bar{\bf b}_1g(B)$. Because $asInd(X)=n$ we have that there is an asymptotic separator $C\subseteq X$ between $g(A)$ and $g(B)$ such that $asInd(C)\leq n-1$. By the definition of coarse proximity isomorphisms, we have that $fg(A)\phi A$ and $fg(B)\phi B$. Since $\phi$-related sets have the same trace (see Proposition 4.8 in \cite{paper3}), we have that $(fg(A))^{\prime}=A^{\prime}$ and $(fg(B))^{\prime}=B^{\prime}$. By inductive hypothesis, we have that $asInd(f(C))=asInd(C)\leq n-1$. It will then suffice to show that $f(C)$ is an asymptotic separator between $A$ and $B$. However, this follows from Lemma \ref{sending traces to traces}. Therefore, $asInd(Y)=n$. 
\end{proof}

Dranishnikov's question regarding the relation between $dim(\nu X)$ and $asInd(X)$ for proper metric spaces (where $dim(\nu X)$ denotes the covering dimension of Higson corona) can be generalized to:
	
\begin{center}

Does $dim(\mathcal{U}X)=asInd(X)$ for all coarse proximity spaces $(X,\mathcal{B},{\bf b})?$

\end{center}

The answer to this generalized question is negative. To see it, we need the following lemmas.

\begin{lemma}\label{realizing separation}
	Let $(X,\mathcal{B},{\bf b})$ be a coarse proximity space, and let $A_{1},B_{1}\subseteq\mathcal{U}X$ be disjoint closed subsets. Then there are unbounded subsets $A_2,B_2\subseteq X$ such that $A_2\bar{ {\bf b}}B_2$, $A_{1}\subseteq A_2^{\prime}$, and $B_{1}\subseteq B_2^{\prime}$.
\end{lemma}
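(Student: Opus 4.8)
The plan is to transport the problem from $\mathcal{U}X$ into the ambient Smirnov compactification $\mathfrak{X}$ of the discrete extension $\delta$, where all the relevant traces live, and to exploit two features of that setting: that $\mathcal{U}X$ is a \emph{closed} subspace of the compact Hausdorff space $\mathfrak{X}$, and that $X$ sits inside $\mathfrak{X}$ as a \emph{dense} subspace. First I would record that, since $\mathcal{U}X$ is compact and $\mathfrak{X}$ is Hausdorff, $\mathcal{U}X$ is closed in $\mathfrak{X}$; hence $A_{1}$ and $B_{1}$, being closed in $\mathcal{U}X$, are disjoint closed subsets of $\mathfrak{X}$ itself. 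We may assume both are nonempty, as this is the case of interest and it is exactly what will force the sought sets to be unbounded.

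Next, using that a compact Hausdorff space is normal, I would separate $A_{1}$ and $B_{1}$ by open sets whose closures are disjoint: there exist open $\mathcal{O}_{A},\mathcal{O}_{B}\subseteq\mathfrak{X}$ with $A_{1}\subseteq\mathcal{O}_{A}$, $B_{1}\subseteq\mathcal{O}_{B}$, and $cl_{\mathfrak{X}}(\mathcal{O}_{A})\cap cl_{\mathfrak{X}}(\mathcal{O}_{B})=\emptyset$. I then define the candidate subsets of $X$ by intersecting with the dense copy of $X$:
\[A_{2}:=\mathcal{O}_{A}\cap X,\qquad B_{2}:=\mathcal{O}_{B}\cap X.\]

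The verification then splits into three routine checks. For the trace containments I would use density in the form: for any open $\mathcal{O}\subseteq\mathfrak{X}$ one has $\mathcal{O}\subseteq cl_{\mathfrak{X}}(\mathcal{O}\cap X)$, since every neighborhood of a point of $\mathcal{O}$ meets the dense set $X$ within $\mathcal{O}$. Applying this to $\mathcal{O}_{A}$ gives $A_{1}\subseteq\mathcal{O}_{A}\subseteq cl_{\mathfrak{X}}(A_{2})$, and intersecting with $\mathcal{U}X\supseteq A_{1}$ yields $A_{1}\subseteq A_{2}^{\prime}$; symmetrically $B_{1}\subseteq B_{2}^{\prime}$. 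For asymptotic disjointness, $A_{2}\subseteq\mathcal{O}_{A}$ and $B_{2}\subseteq\mathcal{O}_{B}$ give $A_{2}^{\prime}\cap B_{2}^{\prime}\subseteq cl_{\mathfrak{X}}(\mathcal{O}_{A})\cap cl_{\mathfrak{X}}(\mathcal{O}_{B})=\emptyset$, so by the characterization $A{\bf b}B\iff A^{\prime}\cap B^{\prime}\neq\emptyset$ from \cite{paper3} we conclude $A_{2}\bar{\bf b}B_{2}$. Finally, unboundedness is automatic: a bounded set has empty trace (axiom (\ref{axiom2}) would force $A_{2}^{\prime}\cap A_{2}^{\prime}=\emptyset$ were $A_{2}$ bounded), whereas $A_{2}^{\prime}\supseteq A_{1}\neq\emptyset$, and likewise for $B_{2}$.

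The only genuinely substantive step is the first one, namely recognizing that the statement should be lifted from $\mathcal{U}X$ into $\mathfrak{X}$, where normality of the compactification together with density of $X$ does all the work; everything after the choice of $\mathcal{O}_{A},\mathcal{O}_{B}$ is elementary point-set topology combined with the trace dictionary. The point requiring mild care is the degenerate situation in which one of $A_{1},B_{1}$ is empty while the other equals all of $\mathcal{U}X$: there no unbounded $A_{2}$ can be asymptotically disjoint from a $B_{2}$ with $B_{2}^{\prime}=\mathcal{U}X$, so the nonemptiness of the target traces is what guarantees both the construction and the unboundedness conclusion go through.
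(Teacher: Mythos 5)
Your proof is correct and takes essentially the same approach as the paper's: the paper likewise regards $A_{1}$ and $B_{1}$ as disjoint closed subsets of the compact Hausdorff space $\mathfrak{X}$, separates them there (via Urysohn's lemma, setting $A_{2}=f^{-1}([0,1/3])\cap X$ and $B_{2}=f^{-1}([2/3,1])\cap X$ rather than intersecting normality-produced open sets with $X$), and obtains the trace containments from the density of $X$ in $\mathfrak{X}$ exactly as you do. Your closing caveat about nonemptiness is, if anything, slightly more careful than the paper, whose proof also tacitly needs $A_{1},B_{1}\neq\emptyset$ in order to conclude that the two preimages meet $X$ in unbounded sets.
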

\begin{proof}
Let $\delta$ be the discrete extension of ${\bf b}$, and $\mathfrak{X}$ the Smirnov compactification of $(X,\delta)$ with $X$ being identified with the set of point clusters of $\mathfrak{X}$. Let $A_{1},B_{1}\subseteq\mathcal{U}X$ be given. Because $\mathcal{U}X$ is a closed and compact subset of $\mathfrak{X},$ we have that $A_{1}$ and $B_{1}$ are disjoint closed subsets of $\mathfrak{X}$. Then using Urysohn's lemma gives a continuous (proximity) function $f:\mathfrak{X}\rightarrow[0,1]$ such that $f(A_{1})=0$ and $f(B_{1})=1$. We then define $A_2=f^{-1}([0,1/3])\cap X$ and $B_2=f^{-1}([2/3,1])\cap X$. These two sets are clearly disjoint. They are also nonempty and unbounded as $f^{-1}([0,1/3])$ and $f^{-1}([2/3,1])$  are compact neighborhoods of $A_{1}$ and $B_{1}$ in $\mathfrak{X},$ respectively. Now we will show that $A_{1}\subseteq A_2^{\prime}$. Let $\sigma\in A_{1}$. Let $U$ be an arbitrary open set in $\mathfrak{X}$ that contains $\sigma$. Then $f^{-1}([0,1/3))\cap U$ is an open set in $\mathfrak{X}$ that contains $\sigma.$ Because $X$ is dense in $\mathfrak{X}$, $f^{-1}([0,1/3))\cap U$ contains an element of $X$. Thus, $f^{-1}([0,1/3))\cap U \cap X \subset A_2 \cap U$ is not empty. Because $U$ was an arbitrary open set in $\mathfrak{X}$ containing $\sigma$, $\sigma$ is in the closure of $A_2$ in $\mathfrak{X}$, i.e., $\sigma \in A_2'.$ Showing that $B_{1}\subseteq B_2^{\prime}$ is similar.
\end{proof}

\begin{lemma}\label{subspace boundaries are subsets of the overboundary}
	Let $(X,\mathcal{B},{\bf b})$ be a coarse proximity space and $Y\subseteq X$ a nonempty subset. If $(Y,\mathcal{B}_{Y},{\bf b}_{Y})$ is the subspace coarse proximity structure, then the inclusion map $\iota:Y\rightarrow X$ induces an embedding $\mathcal{U}\iota:\mathcal{U}Y\rightarrow\mathcal{U}X$. Moreover, the image of $\mathcal{U}\iota$ is precisely $Y^{\prime}$.    
\end{lemma}
\begin{proof}
If $Y$ is bounded, then the corollary is trivially true, so let us assume that $Y$ is unbounded. Let $\delta_{Y}$ and $\delta$ be the discrete extensions of ${\bf b}_{Y}$ and ${\bf b}$, respectively. The inclusion map $\iota:Y\rightarrow X$ is trivially a coarse proximity map, and thus by remarks after Definition \ref{mapUf} gives a continuous map $\mathcal{U}\iota:\mathcal{U}Y\rightarrow\mathcal{U}X$. To show that this map is also injective, let $\sigma_{1},\sigma_{2}\in\mathcal{U}Y$ be clusters with identical image. Recall from Definition \ref{mapUf} that this image is given by
\[\{A\subseteq X\mid\forall B\in\sigma_{1}, A\delta\iota(B)\}=\mathcal{U}\iota(\sigma_{1})=\mathcal{U}\iota(\sigma_{2})=\{A\subseteq X\mid\forall B\in\sigma_{2}, A\delta\iota(B)\}.\]
Since $\iota(B)=B$ for all $B\subseteq Y$, we have that for all $B_{1}\in\sigma_{1}$ and $B_{2}\in\sigma_{2},$ $B_{1},B_{2}\in\mathcal{U}\iota(\sigma_{1})$. Hence, $B_{1}\delta B_{2},$ and subsequently $B_{1}\delta_{Y}B_{2}.$ Thus, if $B_{1}\in\sigma_{1}$, then $B_{1}\delta_{Y}C$ for all $C\in\sigma_{2}$. Therefore $B_{1}\in\sigma_{2}$. Similarly one show that if $B_{2}\in\sigma_{2},$ then $B_{2}\in\sigma_{1}$. Therefore, $\sigma_{1}=\sigma_{2}$ from which we conclude that $\mathcal{U}\iota$ is an embedding. 

To see that the image of $\mathcal{U}\iota$ is $Y^{\prime},$ let $\mathcal{U}\iota(\sigma)$ be an arbitrary element of the image of $\mathcal{U}\iota$ for some $\sigma \in \mathcal{U}Y.$ 
To see that $\mathcal{U}\iota(\sigma) \in cl_{\mathfrak{X}}(Y),$ notice that for all $B \in \sigma,$ we have that $Y{ \bf b} \iota (B)$ (it is because $\iota(B)=B$ and $B \cap Y$ is unbounded, since $B$ is unbounded and $B \subseteq Y$). Thus, $Y \in \mathcal{U}\iota(\sigma),$ i.e., $\mathcal{U}\iota(\sigma) \in cl_{\mathfrak{X}}(Y).$ Consequently, $\mathcal{U}\iota(\sigma) \in Y'.$ Conversely, let $\sigma \in Y'.$ In particular, $Y \in \sigma.$ Then by Theorem $5.16$ in \cite{proximityspaces}, there exists a unique cluster $\sigma'$ in $(Y, \delta_Y)$ contained in $\sigma,$ namely
\[\sigma':=\{A \subseteq Y \mid A \in \sigma\}.\]
Notice that $\sigma' \in \mathcal{U}Y$ (otherwise $\sigma$ would contain a bounded set, since bounded sets in $Y$ are bounded in $X$).
Consider $\mathcal{U}\iota(\sigma').$ It is a cluster in $\mathcal{U}X.$ We will show that  $\sigma=\mathcal{U}\iota(\sigma'),$ which will finish the proof. Let $A \in  \sigma.$ Notice that any element of $\sigma'$ is also an element of $\sigma$. Consequently, for all $B \in \sigma'$ we have that $A\delta B.$ In other words, $A\delta \iota(B)$ for all $B \in \sigma',$ i.e., $A \in \mathcal{U}\iota(\sigma').$ Thus, $\sigma \subseteq \mathcal{U}\iota(\sigma'),$ which shows that $\sigma = \mathcal{U}\iota(\sigma'),$ as desired.
\end{proof}

In light of the above lemma, we may identify the boundary $\mathcal{U}Y$ of a subspace $Y$ of a coarse proximity space $(X,\mathcal{B},{\bf b})$ with its trace $Y^{\prime}$.

\begin{theorem}\label{asymptotic inductive is larger than large inductive of corona}
	$Ind(\mathcal{U}X)\leq asInd(X)$ for all coarse proximity spaces $(X,\mathcal{B},{\bf b})$.
\end{theorem}
\begin{proof}
	The proof is by induction on $asInd(X)$. If $asInd(X)=-1,$ then $X$ is bounded and $\mathcal{U}X=\emptyset$ which implies that $Ind(\mathcal{U}X)=-1$. Now assume that the result holds for $asInd(X)<n$. Assume that $asInd(X)=n$ and let $A_{1},B_{1}\subseteq\mathcal{U}X$ be disjoint closed subsets. Then $A_{1}$ and $B_{1}$ are disjoint closed (compact) subsets in $\mathfrak{X}$, the Smirnov compactification of the proximity space $(X,\delta)$, where $\delta$ is the discrete extension of ${\bf b}$. By Lemma \ref{realizing separation}, there are unbounded subsets $A_{2},B_{2}\subseteq X$ such that $A_{2}\bar{ {\bf b}}B_{2}$, $A_{1}\subseteq A_{2}^{\prime}$, and $B_{1}\subseteq B_{2}^{\prime}$. Because $asInd(X)=n$, there is an asymptotic separator $C\subseteq X$ between $A_{2}$ and $B_{2}$ such that $asInd(C)\leq n-1$. Because $A_{1}\subseteq A_{2}^{\prime}$ and $B_{1}\subseteq B_{2}^{\prime}$ we have that $C^{\prime}$ is a topological separator between $A_{1}$ and $B_{2}$. Since by Lemma \ref{subspace boundaries are subsets of the overboundary} we have that $\mathcal{U}C=C^{\prime}$, by inductive hypothesis we get
	\[Ind(C^{\prime})=Ind(\mathcal{U}C)\leq asInd(C)\leq n-1.\qedhere\]
\end{proof}

Since covering dimension is always smaller or equal to large inductive dimension on compact Hausdorff spaces, we get the following corollary.

\begin{corollary}
$dim(\mathcal{U}X)\leq asInd(X)$ for all coarse proximity spaces $(X,\mathcal{B},{\bf b})$.\qed
\end{corollary}

In 1958, P. Vopenka described a class of compact Hausdorff spaces for which the large inductive dimension is strictly greater than the covering dimension (see \cite{nagami}). Since every compact Hausdorff space can be realized as the boundary of a coarse proximity space (see Corollary 5.3 in \cite{paper3}), Theorem \ref{asymptotic inductive is larger than large inductive of corona} answers the question of Dranishnikov generalized to coarse proximity spaces, as stated in the following corollary.

\begin{corollary}
	There is a coarse proximity space $X$ for which $dim(\mathcal{U}X)<asInd(X)$. \qed
\end{corollary}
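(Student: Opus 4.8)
The plan is to derive this purely from Theorem \ref{asymptotic inducctive is larger than large inductive of corona} together with two external facts quoted in the text: the existence of a compact Hausdorff space on which covering dimension and large inductive dimension disagree, and the realizability of every compact Hausdorff space as a boundary $\mathcal{U}X$. Since both $dim$ and $Ind$ are topological invariants, the entire argument reduces to exhibiting one boundary space on which these two classical dimensions split in the favorable direction and then reading off the inequality.

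First I would invoke Vopenka's construction (cited via \cite{nagami}) to fix a compact Hausdorff space $K$ with $dim(K) < Ind(K)$; such spaces exist precisely because the equality of covering and large inductive dimension, valid for metrizable spaces, can fail in the wider class of compact Hausdorff spaces. Next I would apply the realization result of section 6 of \cite{paper3}, which produces, for any compact Hausdorff space, a coarse proximity space whose boundary is homeomorphic to it. Applying this to $K$ yields a coarse proximity space $(X,\mathcal{B},{\bf b})$ together with a homeomorphism $\mathcal{U}X \cong K$.

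Finally I would assemble the chain. Topological invariance of the classical dimensions transports the homeomorphism to $dim(\mathcal{U}X) = dim(K)$ and $Ind(\mathcal{U}X) = Ind(K)$, and combining this with the strict inequality defining $K$ and with Theorem \ref{asymptotic inducctive is larger than large inductive of corona} gives
\[dim(\mathcal{U}X) = dim(K) < Ind(K) = Ind(\mathcal{U}X) \leq asInd(X),\]
so $dim(\mathcal{U}X) < asInd(X)$, as claimed. There is no real obstacle here; the only point requiring care is that the two invoked black boxes line up exactly, namely that Vopenka's example places $Ind$ strictly above $dim$ and that the realization theorem returns a boundary genuinely homeomorphic to $K$ (so that invariance of $dim$ and $Ind$ may be applied), both of which are guaranteed by the cited results.
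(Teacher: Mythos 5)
Your proposal is correct and follows essentially the same route as the paper: the paper's justification for this corollary is precisely the combination of Vopenka's example of a compact Hausdorff space with $dim < Ind$, the realization of any compact Hausdorff space as a boundary $\mathcal{U}X$ (section 6 of \cite{paper3}), and Theorem \ref{asymptotic inducctive is larger than large inductive of corona}. The chain of inequalities you write out is exactly the intended argument, just made more explicit than in the paper's prose.
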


Since we know that $Ind(\mathcal{U}X)\leq asInd(X)$ for all coarse proximity spaces $(X,\mathcal{B},{\bf b}),$ the next most natural question is under what conditions do $Ind(\mathcal{U}X)$ and $asInd(X)$ coincide? We provide answers to this question in the next section.

\section{Relationship between $asInd(X)$ and $Ind(\mathcal{U}X)$}\label{maintheorems}

In the previous section, the proof of Theorem \ref{asymptotic inductive is larger than large inductive of corona} suggested that the gap (or lack thereof) between $Ind(\mathcal{U}X)$ and $asInd(X)$ for a coarse proximity space $X$ is tied up with which closed sets $K\subseteq\mathcal{U}X$ appear as the traces of unbounded subsets of $X$. It is an easy exercise to show that if $X$ is an unbounded proper metric space and $x$ is an element of the Higson corona $\nu X$, then there is no unbounded set whose trace is precisely $\{x\}$. Being unable to detect all closed subsets of the Higson corona in this way makes closing the gap between $asInd(X)$ (when $X$ is equipped with its metric coarse proximity structure) and $Ind(\nu X)$ by simply modifying the proof of Theorem \ref{asymptotic inductive is larger than large inductive of corona} impossible. In general, whether or not $asInd(X)=Ind(\nu X)$ is an open question. However, in this section we will describe scenarios in which $asInd(X)=Ind(\mathcal{U}X)$ for certain classes of coarse proximity spaces. 
\vspace{\baselineskip}

The class of spaces in which $asInd(X)=Ind(X)$ is the obvious one suggested by the proof of Theorem \ref{asymptotic inductive is larger than large inductive of corona}. Specifically, this is the class of spaces for which every closed subset of the boundary can be realized as the trace of an unbounded set. 

\begin{definition}
	Let $(X,\mathcal{B},{\bf b})$ be a coarse proximity space with boundary $\mathcal{U}X$. A closed subset $C\subseteq\mathcal{U}X$ is called {\bf traceable} if there is some unbounded $A\subseteq X$ such that $A^{\prime}=C$. We say that $\mathcal{U}X$ is {\bf completely traceable} if every nonempty closed $C\subseteq\mathcal{U}X$ is traceable.
\end{definition} 

\begin{theorem}
	If $(X,\mathcal{B},{\bf b})$ is a coarse proximity space whose boundary is completely traceable, then $asInd(X)=Ind(\mathcal{U}X)$.
\end{theorem}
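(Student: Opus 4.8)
The plan is to prove the nontrivial inequality $asInd(X)\le Ind(\mathcal{U}X)$; the reverse inequality $Ind(\mathcal{U}X)\le asInd(X)$ is Theorem \ref{asymptotic inducctive is larger than large inductive of corona} and holds with no hypothesis on $X$. I would argue by induction on $m:=Ind(\mathcal{U}X)$, proving the statement ``$asInd(X)\le Ind(\mathcal{U}X)$'' for \emph{all} coarse proximity spaces with completely traceable boundary simultaneously. For the base case $m=-1$, recall that $\mathcal{U}X=\emptyset$ if and only if $X$ is bounded, so $Ind(\mathcal{U}X)=-1$ forces $asInd(X)=-1$.

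For the inductive step, assume the claim for all completely traceable spaces of boundary dimension $<m$, and suppose $Ind(\mathcal{U}X)=m$. Let $A,B\subseteq X$ satisfy $A\bar{\bf b}B$. Then $A'\cap B'=\emptyset$ by the characterization $A{\bf b}B\iff A'\cap B'\neq\emptyset$, so $A'$ and $B'$ are disjoint closed subsets of $\mathcal{U}X$. Since $Ind(\mathcal{U}X)=m$, there is a separator $S\subseteq\mathcal{U}X$ between $A'$ and $B'$ with $Ind(S)\le m-1$. If $S=\emptyset$, take $C$ to be any bounded set, so that $C'=\emptyset=S$ is a separator and $asInd(C)=-1\le m-1$. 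If $S\neq\emptyset$, complete traceability gives an unbounded $C\subseteq X$ with $C'=S$, and then $C$ is by definition an asymptotic separator between $A$ and $B$. In either case it remains to bound $asInd(C)$. Since $\mathcal{U}C\cong C'=S$ by Corollary $4.18$ in \cite{paper3}, we have $Ind(\mathcal{U}C)=Ind(S)\le m-1$, so the inductive hypothesis applied to $C$ would give $asInd(C)\le m-1$ and finish the proof, \emph{provided} that $\mathcal{U}C$ is again completely traceable.

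Establishing that complete traceability passes to the boundary of every subspace is the step I expect to be the main obstacle, and I would isolate it as a lemma: if $\mathcal{U}X$ is completely traceable and $C\subseteq X$, then $\mathcal{U}C$ is completely traceable. Under the identification $\mathcal{U}C\cong C'$ this amounts to the ``restriction'' statement that every nonempty closed $K\subseteq C'$ equals $D'$ for some $D\subseteq C$. The naive attempt — realize $K=D_0'$ in $X$ by complete traceability and set $D=D_0\cap C$ — fails, because clusters are not closed under intersection, so $D_0,C\in\sigma$ need not give $D_0\cap C\in\sigma$; concretely $(D_0\cap C)'$ can be strictly smaller than $K$, the discrepancy being the part $(D_0\setminus C)'\subseteq K$ coming from boundary points at which $D_0\setminus C$ is coarsely close to, yet set-theoretically disjoint from, $C$. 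Thus any construction built from intersections (or from coarse neighborhoods intersected with $C$) is doomed, and the lemma must be proved by selecting points of $C$ rather than by an algebraic manipulation of clusters.

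To carry out that selection I would work topologically inside the Smirnov compactification $\mathfrak{X}$, in the spirit of Lemma \ref{realizing separation}. Since $C'=cl_{\mathfrak{X}}(C)\cap\mathcal{U}X$ and $\mathfrak{X}$ is compact Hausdorff, hence normal, every point of $K$ is a limit of points of $C$, and the goal is to choose $D\subseteq C$ that accumulates onto every point of $K$ but onto no point of $C'\setminus K$. For each closed $F\subseteq C'\setminus K$, Urysohn's lemma yields $f\colon\mathfrak{X}\to[0,1]$ with $f(K)=0$ and $f(F)=1$, and one checks that $f^{-1}([0,1/3])\cap C$ is a subset of $C$ whose trace contains $K$ and is disjoint from $F$. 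The technical heart is then to assemble these pieces, indexed by a neighborhood net of $K$, into a single set $D\subseteq C$ whose trace is exactly $K$, using the regularity of $\mathcal{U}X$ to squeeze the traces down onto $K$. Once this lemma is in place the induction closes: $\mathcal{U}C$ is completely traceable with $Ind(\mathcal{U}C)\le m-1$, the inductive hypothesis gives $asInd(C)\le m-1$, hence $asInd(X)\le m=Ind(\mathcal{U}X)$, which together with Theorem \ref{asymptotic inducctive is larger than large inductive of corona} yields the desired equality.
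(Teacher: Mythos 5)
Your skeleton is exactly the paper's proof: quote Theorem \ref{asymptotic inducctive is larger than large inductive of corona} for $Ind(\mathcal{U}X)\leq asInd(X)$, induct on $Ind(\mathcal{U}X)$, use complete traceability to realize a separator $S$ between $A'$ and $B'$ with $Ind(S)\leq n-1$ as $S=D'$ for some unbounded $D\subseteq X$, and apply the inductive hypothesis to $D$ via $\mathcal{U}D\cong D'$. You are also right about where this is delicate: the inductive hypothesis applies to $D$ only if $\mathcal{U}D$ is itself completely traceable, a condition the paper's own proof invokes silently and never verifies. The trouble is that the lemma you propose to close this gap --- if $\mathcal{U}X$ is completely traceable then $\mathcal{U}C$ is completely traceable for \emph{every} $C\subseteq X$ --- is false. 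Take $\overline{X}=[0,1]\times\beta\mathbb{N}$ and $X=(0,1]\times\beta\mathbb{N}$ with the coarse proximity structure induced by this compactification (Example \ref{coarse proximity structure induced by a compactification}). The boundary $\{0\}\times\beta\mathbb{N}$ is a $Z$-set in $\overline{X}$ (use $H((t,y),s)=(\max(t,s),y)$), so by the $Z$-set theorem of Section \ref{maintheorems}, $\mathcal{U}X\cong\{0\}\times\beta\mathbb{N}\cong\beta\mathbb{N}$ is completely traceable. Now let $C=(0,1]\times\mathbb{N}$, which is dense in $\overline{X}$, so $C'=\{0\}\times\beta\mathbb{N}$; fix a free ultrafilter $p$ and set $K=\{(0,p)\}\subseteq C'$. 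For $D\subseteq C$ let $S_n\subseteq\mathbb{N}$ be the projection of $D\cap((0,1/n)\times\mathbb{N})$. Using the identification of Example \ref{important_example} and the basic neighborhoods $[0,1/n)\times cl_{\beta\mathbb{N}}(A)$ with $A\in q$, one checks that $(0,q)\in D'$ if and only if $S_n\in q$ for every $n$. Hence if $(0,p)\in D'$, then every $S_n$ lies in $p$ and so is infinite; since the $S_n$ decrease, choosing distinct $m_n\in S_n$ gives an infinite $T=\{m_n\mid n\in\mathbb{N}\}$ with $T\setminus S_k$ finite for every $k$, and then \emph{every} free ultrafilter $q$ containing $T$ contains every $S_k$ (as $T\setminus S_k$ is finite and $q$ is free), i.e., $(0,q)\in D'$. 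Since $T$ lies in at least two distinct free ultrafilters (split $T$ into two infinite halves), $D'$ can never equal the singleton $K$. So $K$ is traceable from $X$ (indeed $D_0=(0,1]\times\{p\}$ works) but not from $C$, and $\mathcal{U}C$ is not completely traceable.

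Consequently the step you called the technical heart --- assembling the Urysohn pieces into a single $D\subseteq C$ with $D'=K$ --- is not merely hard but impossible in general, and the induction cannot be closed by your lemma. Note that the example does not refute the theorem itself: there $asInd(X)=Ind(\mathcal{U}X)=0$, since disjoint closed subsets of $\beta\mathbb{N}$ are separated by clopen sets, so the empty trace of any bounded set is already a separator. What the induction actually needs is much weaker than hereditary traceability: only that the separator $S$, or a tracing set $D$ with $D'=S$, can be \emph{chosen} so that $\mathcal{U}D$ is again completely traceable. Neither your proposal nor, for what it is worth, the paper's own proof supplies such a statement, so as written your argument is incomplete precisely at the step you flagged, and the specific patch you propose is unavailable.
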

\begin{proof}
	In light of Theorem \ref{asymptotic inductive is larger than large inductive of corona}, it will suffice to show that $asInd(X)\leq Ind(\mathcal{U}X).$ The proof will be by induction on $Ind(\mathcal{U}X)$. If $Ind(\mathcal{U}X)=-1,$ then $\mathcal{U}X=\emptyset,$ which implies that $X$ is bounded, and thus $asInd(X)=-1$. Assume then that the result holds for $Ind(\mathcal{U}X)<n$ and assume that $Ind(\mathcal{U}X)=n$. Let $A,B\subseteq X$ be such that $A\bar{ {\bf b}}B$. We may assume without loss of generality that both $A$ and $B$ are unbounded. Because $A\bar{ {\bf b}}B$, we have that $A^{\prime}$ and $B^{\prime}$ are disjoint closed subsets of $\mathcal{U}X$. Because $Ind(\mathcal{U}X)=n$, there is a closed separator $K\subseteq\mathcal{U}X$ between $A^{\prime}$ and $B^{\prime}$ such that $Ind(K)\leq n-1$. Because $\mathcal{U}X$ is completely traceable, there is an unbounded set $D\subseteq X$ such that $D^{\prime}=K$. Equipping $D$ with its subspace coarse proximity structure, we have that $\mathcal{U}D$ is homeomorphic to $K$, and therefore by the inductive hypothesis we have that
	 \[asInd(D)\leq Ind(\mathcal{U}D)=Ind(K)\leq n-1.\]
Therefore, $asInd(X)\leq n=Ind(\mathcal{U}X)$, yielding the desired result.
\end{proof}

Which spaces have completely traceable boundaries? One such class is given by spaces whose boundaries in their compactifications are $Z$-sets.

\begin{definition}
Let $X$ be a topological space and $A$ a closed subset of $X.$ Then $A$ is called a \textbf{$Z$-set} if there exists a homotopy $H: X \times [0,1] \to X$ such that $H(X,0)=id_X$ and $H(X, t) \subseteq (X \setminus A)$ for all $t \in (0,1].$
\end{definition}

Many spaces have boundaries that are $Z$-sets. For example, Gromov boundary of a hyperbolic proper metric space and the visual boundary of a proper CAT$(0)$ space are $Z$-sets in their respective compactifications (see for example \cite{Z_sets}).

\begin{theorem}
	Let $X$ be a locally compact Hausdorff space and $\overline{X}$ a compactification of $X$. Let $(X,\mathcal{B},{\bf b})$ be the coarse proximity structure on $X$ induced by the compactification $\overline{X},$ i.e.,  $\mathcal{B}$ is the collection of all sets whose closures in $X$ are compact, and ${\bf b}$ is defined by
	\[A{\bf b}B\iff cl_{\overline{X}}(A)\cap cl_{\overline{X}}(B)\cap(\overline{X}\setminus X)\ne\emptyset.\]
	If $\overline{X}\setminus X$ is a $Z$-set in $\overline{X},$ then $\mathcal{U}X$ (identified with $\overline{X}\setminus X$) is completely traceable.
\end{theorem}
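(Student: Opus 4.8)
The plan is to work under the identification of $\mathcal{U}X$ with $\overline{X}\setminus X$ furnished by Example \ref{important_example}, under which the trace of an unbounded set $A\subseteq X$ becomes $A^{\prime}=cl_{\overline{X}}(A)\cap(\overline{X}\setminus X)$; this is immediate from the description of ${\bf b}$ in Example \ref{coarse proximity structure induced by a compactification} together with trace property (1). Thus it suffices, given a nonempty closed set $C\subseteq\overline{X}\setminus X$, to produce an unbounded $A\subseteq X$ with $cl_{\overline{X}}(A)\cap(\overline{X}\setminus X)=C$. Let $H:\overline{X}\times[0,1]\rightarrow\overline{X}$ be the homotopy witnessing that $\overline{X}\setminus X$ is a $Z$-set, so that $H(\cdot,0)=id_{\overline{X}}$ and $H(\overline{X},t)\subseteq X$ for every $t\in(0,1]$. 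The key construction is to push $C$ off the boundary along a sequence of times decreasing to $0$, setting
\[A:=\bigcup_{n=1}^{\infty}H(C,1/n).\]
Each slice $H(C,1/n)$ is the continuous image of the compact set $C\times\{1/n\}$ and lies in $X$, hence is a compact subset of $X$; since $X$ is open in $\overline{X}$ and $\overline{X}$ is Hausdorff, each $H(C,1/n)$ is also closed in $\overline{X}$.

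I would then verify the two inclusions. For $C\subseteq cl_{\overline{X}}(A)$, fix $c\in C$; continuity of $H$ gives $H(c,1/n)\to H(c,0)=c$, and each $H(c,1/n)$ lies in $A$, so $c\in cl_{\overline{X}}(A)$, and since $C\subseteq\overline{X}\setminus X$ this yields $C\subseteq cl_{\overline{X}}(A)\cap(\overline{X}\setminus X)$. For the reverse inclusion, take $p\in cl_{\overline{X}}(A)\cap(\overline{X}\setminus X)$ and a net $(a_{\lambda})$ in $A$ with $a_{\lambda}\to p$; for each $\lambda$ write $a_{\lambda}=H(c_{\lambda},1/n_{\lambda})$ with $c_{\lambda}\in C$. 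For any fixed $n_{0}$, the finite union $\bigcup_{n\leq n_{0}}H(C,1/n)$ is compact, hence closed in $\overline{X}$ and does not contain $p$, so $a_{\lambda}$ eventually avoids it; this forces $n_{\lambda}>n_{0}$ eventually, i.e. the times $1/n_{\lambda}$ converge to $0$ along the net. Passing to a subnet along which $c_{\lambda}\to c\in C$ (possible since $C$ is compact) and invoking the joint continuity of $H$, we obtain $a_{\lambda}=H(c_{\lambda},1/n_{\lambda})\to H(c,0)=c$; uniqueness of limits in the Hausdorff space $\overline{X}$ then gives $p=c\in C$. Hence $A^{\prime}=C$. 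Finally, $A$ is unbounded, for if $cl_{X}(A)$ were compact it would be closed in $\overline{X}$ and contained in $X$, forcing $A^{\prime}=\emptyset$ and contradicting $A^{\prime}=C\neq\emptyset$.

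The main obstacle is the reverse inclusion $cl_{\overline{X}}(A)\cap(\overline{X}\setminus X)\subseteq C$: one must show that every boundary limit point of $A$ arises only from letting the time parameter tend to $0$, which relies on the fact that each fixed-time slice $H(C,1/n)$ is a compact subset of the interior $X$ and hence closed in $\overline{X}$, so it cannot contribute limit points in $\overline{X}\setminus X$. Combining this with the joint continuity of $H$ and the compactness of $C$ pins the limit inside $C$. Since a general compactification $\overline{X}$ need not be metrizable, I would carry out this limiting argument with nets rather than sequences.
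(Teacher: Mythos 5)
Your proof is correct, and its core idea is the same as the paper's: use the $Z$-set homotopy to push the closed set $C$ off the boundary into $X$ and show that the resulting subset of $X$ has trace exactly $C$ (both arguments also rely, implicitly or explicitly, on the identification of the Smirnov compactification of this coarse proximity structure with $\overline{X}$ itself, so that traces become $cl_{\overline{X}}(\cdot)\cap(\overline{X}\setminus X)$). Where you differ is in the choice of the tracing set and, consequently, in the work needed for the reverse inclusion. You take countably many discrete time slices $A=\bigcup_{n}H(C,1/n)$, and since $A\cup C$ is not obviously closed you must run a net argument: first forcing the time parameters of a convergent net to tend to $0$ (via closedness of each finite union of slices), then extracting a convergent subnet in $C$ and invoking joint continuity. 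The paper instead takes the full track, setting $K_{*}=H(C,[0,1])$ and $D=K_{*}\setminus C=H(C,(0,1])$; then $K_{*}$ is compact (continuous image of a compact set), hence closed, and the reverse inclusion is a one-line computation: $D'\subseteq K_{*}\cap(\overline{X}\setminus X)=C$. The forward inclusion $C\subseteq D'$ is proved identically in both (push each point $c$ along times tending to $0$). So your variant costs a genuinely harder limiting argument in exchange for a sparser tracing set, while the paper's full-track construction lets compactness do all the work; if you replace your union of slices by $H(C,(0,1])$, your proof collapses to the paper's.
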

\begin{proof}
	Denote the Smirnov compactification of $X$ given by the discrete extension of ${\bf b}$ by $\mathfrak{X}$. Then $\mathfrak{X}=X\cup\mathcal{U}X$. Let $H:\mathfrak{X}\times I\rightarrow\mathfrak{X}$ be a homotopy that witnesses $\mathcal{U}X$ being a $Z$-set of $\mathfrak{X}$, and let $K\subseteq \mathcal{U}X$ be a nonempty closed subset. Define	
	\[K_{*}=H(K,[0,1])\text{ and }D=K_{*}\setminus K.\]
	Then $K\subseteq K_{*}$ and $D=H(K,(0,1])$ is an unbounded subset of $X$ such that $K\subseteq D^{\prime}$. This latter statement can be seen as given $x\in K,$ we have that given any sequence $(s_{n})$ in $(0,1]$ converging to $0,$ we have that the sequence $(H(x,s_{n}))$ converges to $x$. To see that $D^{\prime}$ is precisely $K,$ we simply note that by the closed map lemma $K_{*}=K\cup D$ is closed in $\mathfrak{X},$ and thus
\[D'=cl_{\mathfrak{X}}(D) \cap \mathcal{U}X\subseteq cl_{\mathfrak{X}}(K_{*}) \cap \mathcal{U}X=K_{*}\cap \mathcal{U}X=K. \qedhere\] 
\end{proof}

\begin{corollary}\label{corollary111}
	Let $X$ be a locally compact Hausdorff space and $\overline{X}$ a compactification of $X$ such that $\overline{X}\setminus X$ is a $Z$-set in $\overline{X}.$ Let $(X,\mathcal{B},{\bf b})$ be the coarse proximity structure on $X$ induced by the compactification $\overline{X}$. Then
\[asInd(X)=Ind(\mathcal{U}X)=Ind(\overline{X}\setminus X). \eqno\qed\]
\end{corollary}

\begin{corollary}
Let $X$ be a proper $\delta$-hyperbolic metric space, $\overline{X}$ its Gromov compactification, and $\overline{X}\setminus X$ its Gromov boundary. Then,
\[asInd(X)=Ind(\overline{X}\setminus X).\]
\end{corollary}
\begin{proof}
This follows from Corollary \ref{corollary111} and the fact that $\overline{X}\setminus X$ is a $Z$-set in $\overline{X}$  (see for example \cite{Z_sets}).
\end{proof}

Another class of spaces with completely traceable boundaries are spaces  admitting metrizable compactifications. Such compactifications were described in detail using controlled products in \cite{yamauchi}. 

\begin{theorem}\label{metrizeablecompeletelytraceable}
	Let $X$ be a locally compact Hausdorff space and $\overline{X}$ a metrizable compactification of $X$. Let $(X,\mathcal{B},{\bf b})$ be the coarse proximity structure on $X$ induced by the compactification $\overline{X}$. Then $\overline{X}\setminus X$, identified with $\mathcal{U}X$, is completely traceable.
\end{theorem}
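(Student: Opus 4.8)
The plan is to realize each nonempty closed $K\subseteq\overline{X}\setminus X$ as the trace of a countable union of finite nets drawn from $X$. First I would record the concrete meaning of the trace in this setting. Under the identification of the Smirnov compactification $\mathfrak{X}$ of the discrete extension with $\overline{X}$ itself (so that $\mathcal{U}X=\overline{X}\setminus X$, exactly as in the preceding $Z$-set theorem and as in Example \ref{important_example}), the trace of an unbounded $A\subseteq X$ is $A'=cl_{\overline{X}}(A)\cap(\overline{X}\setminus X)$. I would also note that $X$, being locally compact Hausdorff and dense in the compact Hausdorff space $\overline{X}$, is open in $\overline{X}$, so $\overline{X}\setminus X$ is closed and no point of it lies in the closure of a finite subset of $X$. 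Fix a metric $\rho$ compatible with the (metrizable) topology of $\overline{X}$ and write $\partial=\overline{X}\setminus X$.

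Next, for each $n\in\mathbb{N}$ I would construct a finite set $F_n\subseteq X$ that is a $(1/n)$-net for $K$ and lies within $1/n$ of $K$. Concretely, cover the compact set $K$ by finitely many balls $B_\rho(k_i,1/(2n))$ with $k_i\in K$; since $X$ is dense, choose $a_i\in X$ with $\rho(a_i,k_i)<1/(2n)$ and set $F_n=\{a_i\}$. Then $\rho(F_n,K)<1/(2n)$ and every point of $K$ is within $1/n$ of some element of $F_n$. Put $A=\bigcup_n F_n$, a countable subset of $X$.

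Then I would verify $A'=K$ in two inclusions. For $K\subseteq A'$: each $k\in K$ is within $1/n$ of a point of $F_n\subseteq A$ for every $n$, so $k\in cl_{\overline{X}}(A)$, and since $k\in\partial$ we get $k\in A'$; in particular $A'\neq\emptyset$, so $A$ is unbounded. For $A'\subseteq K$: given $p\in cl_{\overline{X}}(A)\cap\partial$, use metrizability to select $a_j\in A$ with $a_j\to p$. Since $p\in\partial$ and $X$ is open, the $a_j$ may be taken pairwise distinct, and because each $F_n$ is finite the least index $n_j$ with $a_j\in F_{n_j}$ satisfies $n_j\to\infty$. Hence $\rho(a_j,K)<1/(2n_j)\to 0$, and continuity of $\rho(\cdot,K)$ forces $\rho(p,K)=0$, i.e. $p\in K$.

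The only genuinely delicate point is the inclusion $A'\subseteq K$, i.e. ruling out spurious boundary accumulation of $A$ away from $K$. The finiteness of each $F_n$ together with the openness of $X$ in $\overline{X}$ is exactly what prevents a boundary limit point from being approached by points bounded away from $K$: any such limit must be reached through elements coming from arbitrarily large $n$, whose distance to $K$ tends to zero. Everything else (the choice of nets, the inclusion $K\subseteq A'$, and unboundedness of $A$) is routine once the trace is identified with $cl_{\overline{X}}(A)\cap(\overline{X}\setminus X)$.
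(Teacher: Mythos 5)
Your proof is correct and follows essentially the same strategy as the paper's: for each $n$ you cover $K$ by finitely many small balls centered at points of $K$, pick points of the dense set $X$ inside them, take the union over $n$, and verify both inclusions for the trace. The only differences are cosmetic (radius $1/(2n)$ versus $1/n$, and your distinctness/least-index argument for $A'\subseteq K$, which is if anything a cleaner write-up of the paper's subsequence argument).
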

\begin{proof}
Let $K\subseteq\mathcal{U}X$ be a given nonempty closed subset, and let $d$ be a metric on $\overline{X}$ that is compatible with the topology on $\overline{X}$. For each $n\in\mathbb{N},$ let 
\[\mathcal{C}_{n}=\{B(x_{n_1},1/n), B(x_{n_2},1/n)\ldots,B(x_{n_m},1/n)\}\]
be a finite open cover of $K$ where each $x_{n_{i}}\in K$. As $X$ is dense in $\overline{X},$ we have that $B(x_{n_{i}},1/n)$ intersects $X$ nontrivially for each $i$ and $n$. We then let $y_{n_{i}}$ be an element of $B(x_{n_{i}},1/n)\cap X$ for each $i$ and $n$. Define $D\subseteq X$ to be the collection of all these $y_{n_{i}}.$ 
We claim that $D^{\prime}=K$. To see that $D^{\prime}\subseteq K,$ let $(y_{n})$ be an unbounded sequence in $D$ that converges to some element of $\overline{X}\setminus X$. Because this sequence is unbounded, we have that for each $n\in\mathbb{N}$ there is some $m\geq n$ such that $y_{m}$ is in an element of $\mathcal{C}_{n}$. Then there is a subsequence $(y_{n_{k}})$ of $(y_{n})$ such that $\lim_{k\to\infty}d(y_{n_{k}},K)=0$, which implies that $(y_{n_{k}})$ converges to a point in $K$. As each subsequence of $(y_{n})$ must converge to the same point as $(y_{n}),$ we have that $(y_{n})$ converges to a point of $K$, which gives us that $D^{\prime}\subseteq K$. Now let $x\in K$. For each $n\in\mathbb{N},$ there is some $x_{n_{i}}\in K$ such that $x\in B(x_{n_{i}},1/n)$. Choosing one such open ball for each $n\in\mathbb{N},$ we specify an unbounded sequence in $D$ that converges to $x$. Therefore, $K\subseteq D^{\prime}$ and consequently $K=D^{\prime}$. Thus, $\mathcal{U}X$ is completely traceable.
\end{proof}

\begin{corollary}\label{metrizablecompactificationdimension}
	Let $X$ be a locally compact Hausdorff space and $\overline{X}$ a metrizable compactification of $X$. Let $(X,\mathcal{B},{\bf b})$ be the coarse proximity structure on $X$ induced by the compactification $\overline{X}$. Then 
\[asInd(X)=Ind(\mathcal{U}X)=Ind(\overline{X}\setminus X). \eqno\qed\]
\end{corollary}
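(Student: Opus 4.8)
The plan is to assemble three facts already established in the paper; the corollary is essentially a bookkeeping statement and requires no new ideas. First, I would invoke the immediately preceding theorem: since $\overline{X}$ is a metrizable compactification of the locally compact Hausdorff space $X$, and $X$ carries the coarse proximity structure induced by $\overline{X}$, that theorem tells us the boundary $\mathcal{U}X$ (identified with $\overline{X}\setminus X$) is completely traceable. Second, I would apply the theorem asserting that any coarse proximity space whose boundary is completely traceable satisfies $asInd(X)=Ind(\mathcal{U}X)$. Applied to our $X$, this yields the first of the two claimed equalities directly.

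For the second equality, I would recall from Example \ref{important_example} that when $X$ is equipped with the coarse proximity structure induced by the compactification $\overline{X}$, the boundary $\mathcal{U}X$ is homeomorphic to the remainder $\overline{X}\setminus X$. Since the large inductive dimension $Ind$ is a topological invariant (it is defined purely in terms of separators between disjoint closed subsets, and a homeomorphism carries closed sets to closed sets, preserves disjointness, and carries separators to separators), we obtain $Ind(\mathcal{U}X)=Ind(\overline{X}\setminus X)$. Chaining the two gives $asInd(X)=Ind(\mathcal{U}X)=Ind(\overline{X}\setminus X)$, as desired.

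There is no genuine obstacle here: the statement follows immediately from the preceding theorem, the general equality theorem for completely traceable boundaries, and the topological invariance of $Ind$. The only point meriting a moment's care is the identification of the two notions of boundary — the abstract boundary $\mathcal{U}X$ of the coarse proximity space on the one hand, and the concrete remainder $\overline{X}\setminus X$ on the other — via the homeomorphism of Example \ref{important_example}, which is precisely what legitimizes transporting the dimension equality across. This is exactly why the corollary is stated without a detailed proof.
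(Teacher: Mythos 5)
Your proposal is correct and is exactly the argument the paper intends (the corollary is stated with its proof omitted as immediate): complete traceability from the preceding theorem, the equality $asInd(X)=Ind(\mathcal{U}X)$ for completely traceable boundaries, and the identification $\mathcal{U}X\cong\overline{X}\setminus X$ from Example \ref{important_example} together with the topological invariance of $Ind$. Nothing is missing.
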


\section{Freudenthal boundary}\label{Freud}

In this section we will apply the results of the preceding section to the Freudenthal boundary of a class of metrizable spaces. We begin by recalling the following definition.

\begin{definition}
Let $X$ be a topological space and $A$ and $B$ two disjoint subsets of $X$. We say that $A$ and $B$ are {\bf separated by a compact set} in $X$ if there is a compact set $K\subseteq X$ that separates $A$ and $B$ in $X$. That is, there is a compact set $K\subseteq X$ that is disjoint from $A$ and $B$ and is such that $X\setminus K$ is the union of two disjoint open sets $U_{1},U_{2}$ with $A\subseteq U_{1}$ and $B\subseteq U_{2}$.

\end{definition}

The Freudenthal compactification is then, as described in \cite{isbell}, given by the following characterization.

\begin{definition}
Let $X$ be a locally compact Hausdorff space. The {\bf Freudenthal compactification} of $X$ is the Smirnov compactification of $X$ corresponding to the separated proximity $\delta_{F}$ on $X$ (hereafter referred to as the Freudenthal proximity) defined by

\[A\bar{\delta}_{F}B\iff A\text{ and }B\text{ are separated in }X\text{ by a compact set.}\]

\noindent
for subsets $A,B\subseteq X$. Denoting this compactification by $FX$, the boundary $FX \setminus X$ is called the {\bf Freudenthal boundary} of $X$.

\end{definition}

In \cite{paper3}, the coarse proximity structure on a locally compact Hausdorff space $X$ whose boundary is homeomorphic to the Freudenthal boundary of $X$ was described. Specifically, if $X$ is a locally compact Hausdorff space, then let $\mathcal{B}_{C}$ be the collection of all precompact subsets of $X$ (i.e., subsets whose closure is compact). Then, $\mathcal{B}_{C}$ is a bornology and the binary relation ${\bf b}_{F}$ on the powerset of $X$ defined by

\[A{\bf b}_{F}B\iff (A\setminus D)\delta_{F}(B\setminus D)\text{ for all }D\in\mathcal{B}_{C}\]

\noindent
is a coarse proximity relation. The coarse proximity structure $(X,\mathcal{B}_{C},{\bf b}_{F})$ is called the {\bf Freudenthal coarse proximity structure}. Now, recall that for a topological space $X$ and a point $x\in X$, the {\bf quasi-component} of $X$ is the intersection of all open and closed (clopen) subsets of $X$ that contain $x$. Distinct quasi components in $X$ are disjoint and closed. The {\bf quasi-component} space of $X$, denoted $Q(X)$, is the set of quasi-components in $X$ with the topology generated by the clopen subsets of $X$. The following theorem appears as a corollary in \cite{dimensionandextensions}:

\begin{theorem}\label{metrizablefreudenthal}
Let $X$ be a locally compact, separable metrizable space. Then $FX$ is metrizable if and only if $Q(X)$ is compact.
\end{theorem}

\noindent
In light of this theorem and Theorem \ref{metrizeablecompeletelytraceable}, we get the following corollary:

\begin{corollary}
Let $X$ be a locally compact, separable, metrizable space with compact quasicomponent space. Equip $X$ with the Freudenthal coarse proximity structure $(X, {\bf b}_{F},\mathcal{B}_{C})$. Then $\mathcal{U}X$, which is homeomorphic to the Freudenthal boundary, is completely traceable in the Freudenthal compactification of $X$.
\end{corollary}

\noindent
Combining this with Corollary \ref{metrizablecompactificationdimension}, we get the following:

\begin{corollary}
Let $X$ be a locally compact, separable, metrizable space with compact quasicomponent space. If $X$ is equipped with the Freudenthal coarse proximity structure, then

\[asInd(X)=Ind(\mathcal{U}X)=Ind(FX\setminus X)\]

\noindent
where $FX$ is the Freudenthal compactification of $X$. 
\end{corollary}

A more in depth discussion of the Freudenthal compactification, its boundary, and its application to groups in the context of coarse geometry can be found in \cite{coarsefreudenthal}.

\bibliographystyle{abbrv}
\bibliography{Asymptotic_Inductive_Dimension_of_Coarse_Proximity_Spaces}{}

\end{document}